\DeclareMathOperator{\trace}{\mathrm{tr}}
\begin{document}

\title{Mathematical analysis of complex SIR model with coinfection and density dependence}

\author[1]{Samia Ghersheen}
\author[1]{Vladimir Kozlov}
\author[1]{Vladimir Tkachev}
\author[2]{Uno Wennergren}

\authormark{Samia Ghersheen, Vladimir Kozlov, Vladimir Tkachev*,  Uno Wennergren}

\address[1]{\orgdiv{Department of Mathematics}, \orgname{Link\"oping University}, \orgaddress{\state{Link\"oping}, \country{Sweden}}}
\address[2]{\orgdiv{Department of Physics, Chemistry, and Biology}, \orgname{Link\"oping University}, \orgaddress{\state{Link\"oping}, \country{Sweden}}}
%\address[3]{\orgdiv{Org Division}, \orgname{Org Name}, \orgaddress{\state{State name}, \country{Country name}}}

\corres{*Vladimir Tkachev, This is sample corresponding address. \email{vladimir.tkatjev@liu.se}}

%\presentaddress{This is sample for present address text this is sample for present address text}

\abstract[Summary]{An SIR model with the coinfection of the two infectious agents in a single host population is considered. The model includes the environmental carry capacity in each class of population. A special case of this model is analyzed and several threshold conditions are obtained which describes the establishment of disease in the population. We prove that for small carrying capacity $K$ there exist a globally stable disease free equilibrium point. Furthermore, we establish the continuity of the transition dynamics of the stable equilibrium point, i.e. we prove that (1) for small values of $K$ there exists a \textit{unique} globally stable equilibrium point, and (b) it moves continuously as $K$ is growing (while its face type may change). This indicate that carrying capacity is the crucial parameter and increase in resources in terms of carrying capacity promotes the risk of infection.}

\keywords{SIR model, coinfection, carrying capacity, global stability}

\maketitle

\section{Introduction}
Coinfection means that a person is affected by more than one infectious agents at a time. Many pathogens that infect humans (e.g., viruses, bacteria, protozoa, fungal parasites) often coexist within individuals \cite{Griffiths}. Consequently co-infection of individual hosts by multiple infectious agents is a phenomenon that is very frequently observed in natural populations. The study of complete dynamics of this phenomenon involves many complexities. By understanding the multiple interactions that cause co-infection, we will be able to understand and intelligently predict how a suite of co-infections will together respond to medical interventions as well as other environmental changes \cite{Viney}.

Mathematical analysis of infectious diseases has significant importance in  infectious disease epidemiology to study not only the dynamics of disease but it is also very helpful to design the practical controlling strategies.
Mathematical analysis and models have successfully explained previously mystifying  observations and played a central part in public health strategies in many countries \cite{May,Glasser}.

Many mathematical studies exist on interaction of multiple strains and multiple disease co-interactions in \cite{Ferguson,Kawaguchi},\cite{Sharp1},  \cite{Chaturvedi}  \cite{Mukandavire}, \cite{Nthiiri,Abu1}, \cite{Okosun}, \cite{ Castillo}. Some of the studies are about the general dynamics of coinfection in  \cite{Zhang}, \cite{ Bichara}, \cite{Martcheva}.

Since coinfection includes a lots of complexities and therefore a lot of different classes and interactions between classes. This implies the necessity
 of advanced mathematical analysis to handle this problem. But there is also a risk that one reaches the boarder of what is actually possible to analyse. Previously, Allen et al. \cite{Allen} studied a SI model with density dependent mortality and coinfection in a single host where one strain is vertically and the other is horizontally transmitted and the model has application on hantavirus and arenavirus and Gao et al. \cite{Gao} studied a SIS model with dual infection. Simultaneous transmission of infection and no immunity has been considered. The study revealed that the coexistence of multiple agents caused co-infection and made the disease dynamics more complicated. It was observed that coexistence of two disease can only occur in the presence of coinfection. In above models they considered that the number of births per unit time is constant.
Our approach in this paper is to show different degrees of complexities and to suggest a model which is an extension of model studied by Ghersheen et al. in \cite{SKTW18a} where an SIR model was analysed. That model describes the coinfection of the two infectious agents in a single host population with an addition of limited growth of susceptible in terms of carrying capacity. Previously, to diminish the complexity, it was considered that there is no interaction between two single infectious agent and coinfection only occur as a result of interaction between coinfected class and single infected class and coinfected class and susceptible class. In this model we add more complexity by relaxing all these assumptions and adding the density dependence in each class.  The addition of two viruses with density dependence for human population  is a new modelling perspective since death and birth rates changes over time in human population. One billion out of eight billions of human population is facing the problem of hunger due to the lack of resources that can fluctuate the birth rates over time. So, in contrast to Allen et al. \cite{Allen} and Gao we first analyse the model with same interaction terms but only the growth of susceptible class is limited in terms of carrying capacity, since infected and recovered population is regulated by its death rate. We assume that infected and recovered individuals cannot reproduce.

  We formulate a SIR model with coinfection and logistic type population growth in each class population to study the effects of carrying capacity on disease dynamics. However, contrary to \cite{Allen}, to study the global behaviour of the system, the reduction of the system is needed to some sense. So in the first place we consider the relative simplified model and only limit the growth of susceptible populations in terms of carrying capacity. We assume that the infected population cannot reproduce due to infection. We carried out the local and  global stability analysis using a generalized Volterra function for each stable point to study the complete dynamics of disease.
 We analyse a SIR model with complete cross immunity.
 In section~\ref{sec:model} we begin with the description of full model and in section 2 we  presented and analysed a submodel. In Section~\ref{sec:general} we recall some general facts about SIR model \eqref{Model}, and in the remained sections we characterize all equilibrium points and give the results regarding local and global stability.

\section{Formulation of the model}\label{sec:model}
The first model is the relevant extension to the model presented in \cite{SKTW18a} to understand the complex dynamics of coinfection. Firstly, we assume that a susceptible individual can be infected with either one or both infectious agents as a result of contact with coinfected individual. Secondly, coinfection occur as a result of contact between two single infected individuals or coinfected and single infected  person. This process is illustrated in the compartmental diagram in \figurename{1}.
\begin{figure}[h]
	\begin{center}
		\begin{tikzpicture}[every node/.style={ minimum height={1cm},minimum width={2cm},thick,align=center}]
		%node distance = 10cm
		\node[draw] (I1) {$I_1$};
		\node[draw, above right=of I1] (S) {$S$};
		\node[draw, below=of S] (I12) {$I_{12}$};
		\node[draw, below=of I12] (R) {$R$};
		\node[draw, right= of I12] (I2) {$I_2$};
		%\draw[->] (S) -- (I1); %node[above] {$\alpha_1$};
		\draw[->] (S) -- (I12) ;%node[above] {$\alpha_2$};
	%	\draw[->,] (S) -- (I2);%edge node[right] {$\alpha_3$};
		\draw[->,] (I2) -- (I12);
		\draw[->,] (I1) -- (I12);
		%\draw[->] (I1) -- (R);
		%\draw[->] (I2) -- (R);
		\draw[->,] (I12) -- (R);
			%\draw [->] (S)-- ++(1.4,.2) -- ++(-0.5,.001) -|  (I2);
		\draw [->] (S)-- ++(1.3,.1) -- ++(0,.0) -|  (I2);
		\draw [->] (S)-- ++(-1,.01) -- ++(0,.1) -|  (I1);
		%\draw [->] (S)-- ++(0,.1) -- ++(-0.9,-0.01) -|  (I1);
		%\draw [->] (I2)-- ++(-.1,-.5) -- ++(0,-.5) -| node  {$\gamma_1$} (I1);
		\draw [->] (I2) -- ++(0,-2) -|  (R.east);
		\draw [->] (I1)-- ++(-.09,-.5) -- ++(0,-1.5) -|  (R.west);
		%\draw [->] (I1)-- ++(-.1,-.1) -- ++(0,-3) -| node  {$\rho_1$} (R.south);
		\node[left =of I1] at (1.5,1.5) {$\alpha_1,\beta_1$};
		\node[left =of I1] at (8.5,1.5) {$\alpha_2,\beta_2$};
		\node[left =of I12]at (3.5,0.25) {$\eta_1,\gamma_1$};
		\node[left =of I1] at (5.3,1) {$\alpha_3$};
		\node[left =of I1] at (4.5,-0.9) {$\rho_3$};
		\node[left =of I1] at (6.5,0.25) {$\eta_2,\gamma_2$};
		\node[left =of I2] at (3,-2.5) {$\rho_1$};
		\node[left =of I1] at (7,-2.5) {$\rho_2$};
		
		\end{tikzpicture}
	\end{center}
	\vspace*{-0.5cm}
	\caption{ Flow diagram for two strains coinfection model.}
	\label{flowdiag}
\end{figure}
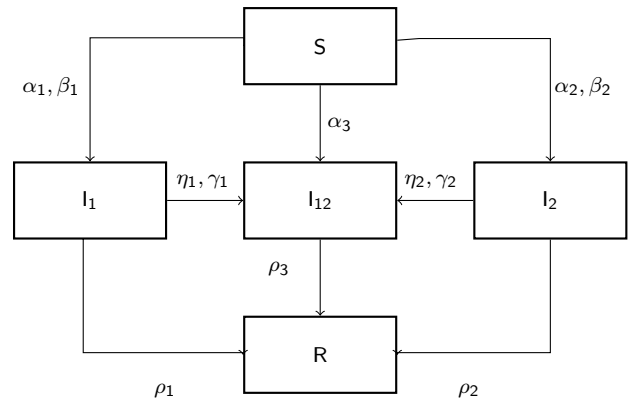

\noindent

Following \cite{SKTW18a,Allen,Bremermann,Zhou}, we assume limited population growth by making the per capita reproduction rate depend on the density of population. We also consider the recovery of each infected class. The SIR model is then described by the system of five ordinary differential equations as follows
\begin{equation}\label{Model}
\begin{split}
S' &=\biggl(b_1(1-\frac{N}{K_1})-\alpha_1I_1-\alpha_2I_2-(\beta_1+\beta_2+\alpha_3)I_{12}-\mu_0\biggr)S,\\
I_1' &=\left(b_2(1-\frac{N}{K_2})+\alpha_1S - \eta_1I_{12}-\gamma_1I_2 - \mu_1\right)I_1+\beta_1SI_{12},\\
I_2' &=\left(b_3(1-\frac{N}{K_3})+\alpha_2S - \eta_2I_{12}-\gamma_2I_1- \mu_2\right)I_2+\beta_2SI_{12},\\
I_{12}' &=\left(b_4(1-\frac{N}{K_4})+\alpha_3S+ \eta_1I_1+\eta_2I_2-\mu_3\right)I_{12}+(\gamma_1+\gamma_2)I_1I_2, \\
R' &=\left(b_5(1-\frac{N}{K_5})-\mu_4'\right)R+\rho_1 I_1+\rho_2I_2+\rho_3 I_{12}.
\end{split}
\end{equation}
Here $S$ represents the susceptible class, $I_1$ and $I_2$ are infected classes from strain 1 and strain 2 respectively, $I_{12} $ represents co-infected class, $R$ represents the recovered class. Finally,
$$N=S+I_1+I_2+I_{12}+R
$$
is the total population. Here
\begin{itemize}
\item
$b_i$ is the birthrate of class $i=1,2,3,4$;
\item
$K_i$ is carrying capacity of class $i=1,2,3,4$;
\item
$\rho_i$ is recovery rate from each infected class ($i=1,2,3$);
\item
$\beta_i$ is the rate of transmission of single infection from coinfected class ($i=1,2$);
\item
$\gamma_i$ is the rate at which infected with one strain get infected with  the other strain and move to coinfected class ($i=1,2$);
\item
$\mu_i' $ is death rate of each class,  $( i=1,2,3,4)$;
\item
$\alpha_1$, $\alpha_2$, $\alpha_3$  are rates of transmission of strain 1, strain 2 and both strains (in the case of coinfection),
\item
$\eta_i$ is rate at which infected from one strain getting  infection from co-infected class $( i=1,2)$ and $\mu_i=\rho_i+\mu_i', i=1,2,3.$
\end{itemize}

%A rigorous analysis of the complete system \eqref{Model} is quite involved.
In the present paper we address a certain specialization of \eqref{Model} (a SIR model with limited growth of susceptible population). More precisely, we assume that the per capita reproduction of susceptible population is limited by carrying capacity $K<\infty$ while infected and recovered individuals cannot reproduce, i.e.
$$
b_i=0\quad \text{for }i\ge2.
$$
The corresponding SIR model reduces to the following system:
\begin{equation}\label{submodel1}
\begin{split}
S' &=(b(1-\frac{S}{K})-\alpha_1I_1-\alpha_2I_2-(\beta_1+\beta_2+\alpha_3)I_{12}-\mu_0)S,\\
I_1' &=(\alpha_1S - \eta_1I_{12}-\gamma_1I_2 - \mu_1)I_1+\beta_1SI_{12},\\
I_2' &=(\alpha_2S - \eta_2I_{12}-\gamma_2I_1- \mu_2)I_2+\beta_2SI_{12},\\
I_{12}' &=(\alpha_3S+ \eta_1I_1+\eta_2I_2-\mu_3)I_{12}+(\gamma_1+\gamma_2)I_1I_2, \\
R' &=\rho_1 I_1+\rho_2I_2+\rho_3 I_{12}-\mu_4' R.
\end{split}
\end{equation}

We consider some  natural assumptions on the fundamental parameters  of the system and the initial data. First note that if the reproduction rate of susceptible is less than their death rate then  population will die out quickly. So we shall always  assume that
$$
b>\mu_0.
$$
The system is considered under the natural initial conditions
\begin{equation}\label{initdata}
S(0)>0,\quad I_1(0)\geq0, \quad I_2(0)\geq0,\quad  I_{12}(0)\geq0.
\end{equation}
Then it  follows from the standard theory (see, for example, Proposition ~2.1 in \cite{Haddad})  that  any integral curve with \eqref{initdata} is staying in the non negative cone for all $t\ge0$. Note also that since the variable $R$ is not present in the first four equations, we may consider only the first four equations of system \eqref{Model}.
%It is convenient to keep the following unifying notation: $S=Y_0$, $I_1=Y_{1}$, $I_{2}=Y_{2}$ and $I_{12}=Y_{3}$.
In \cite{SKTW18a} the particular case of \eqref{submodel1} was completely treated, when the parameters $\beta_i$ and $\gamma_j$ vanish, i.e.
\begin{equation}\label{vanish}
\beta_1=\beta_2=\gamma_1=\gamma_2=0,
\end{equation}
The corresponding system has the Lotka-Volterra type and an approach based on the linear complimentary problem was suggested. The latter allows to obtain an effective description of the transition dynamics of  \eqref{submodel1} for any admissible values of the fundamental parameters. The present model is more involved and is no longer a Lotka-Volterra system. But, thinking of \eqref{submodel1} as a perturbation  of the Lotka-Volterra case it is reasonable to believe that some basic properties can be extended for positive small values $\beta_i$ and $\gamma_j$. Below we shall see that this is indeed the case.

Let us introduce the reproduction/threshold numbers of the system \eqref{submodel1} for strains $1$ and $2$ respectively by
$$
\sigma_i:=\frac{\mu_i}{\alpha_i}, \qquad 1\le i\le 2.
$$
Then by change of the indices (if needed) we may assume that $\sigma_1 < \sigma_2$, i.e.
$$
\sigma_1 < \sigma_2.
$$
The letter also means that strain 1 is more aggressive than strain 2.

Since the interaction between coinfected and susceptible classes results a single infection transmission or simultaneous transmission of two infections:
\begin{align*}
\beta_1SI_{12}& \quad \rightarrow \quad I_1\\
\beta_2SI_{12}& \quad \rightarrow \quad I_2\\
\alpha_3SI_{12}& \quad \rightarrow \quad I_{12}
	\end{align*}
it follows from \figurename{1} that the corresponding reproduction/threshold number of the coinfected class is determined by
\begin{equation}\label{sigma3}
\sigma_3=\frac{\mu_3}{\hat{\alpha_3}}=\frac{\mu_3}{\alpha_3+\beta_1+\beta_2}.
\end{equation}
where
\begin{equation}\label{hatalpha}
\hat{\alpha_3}:=\alpha_3+\beta_1+\beta_2
\end{equation}
is  the total transmission rate of infection from coinfected class to susceptible class. Note that in the Lotka-Volterra case \eqref{vanish}
 $\sigma_3=\frac{\mu_3}{\alpha_3}$ which is completely consistent with the notation of \cite{SKTW18a}.

For biological reasons, the latter total transmission rate should be less than other transmission rates comparable with the corresponding death rates. On the other hand, it is natural to assume that the death rates $\mu_i$ are almost the same for different classes. This  makes it natural to assume the following hypotheses:
\begin{equation}\label{assum}
\sigma_1< \sigma_2 < \sigma_3.
\end{equation}

Finally, let us introduce an important parameter of the above system, the so-called \textit{modified carrying capacity} defined by
\begin{equation}\label{capac}
S^{**}:=K(1-\frac{\mu_0}{b})>0.
\end{equation}
Note that $S^{**}$ is always less than $K$ but it is proportional to $K$ whenever $b$ and $\mu_0$ are fixed. We study the transition dynamics of stable equilibrium states depending on the value of $K$ in section~\ref{sec:transition} below.
The vector of fundamental parameters
$$
p=(b,K,\mu_i,\alpha_j,\eta_k,\gamma_k,\beta_k)\in R^{11}_+,\qquad
0\le i\le 3, \,\,1\le j\le 3, \,\,1\le k\le 2,
$$ is said to be \textit{admissible} if \eqref{assum} holds.
%To simplify the analysis and understand the complete dynamics of coinfection. We  formulate three special cases of model \eqref{Model} as stated below.

\section{General facts on the SIR model }
\eqref{submodel1}
\label{sec:general}
In this section,  we study some basic properties for the system \eqref{submodel1} which are essential in the our analysis of stability results. We follow  the approach given in \cite{SKTW18a} for the Lotka-Volterra case \eqref{vanish}.
\begin{proposition}
	If $(S,I_1,I_2,I_{12})(t)$ is a solution of \eqref{submodel1} with $S(0)$ positive then
	\begin{equation}\label{colo}
	S(t) \leq \frac{1}{\frac{1}{S^{**}}(1-e^{-(b-\mu_0)t})+\frac{1}{S(0)}e^{-(b-\mu_0)t}} .
	\end{equation}
	In particular,
	\begin{equation}\label{boundr1}
	S(t)\leq \max\{ S^{**},S(0)\}
	\end{equation}
	and
	\begin{equation}\label{limit}
	\limsup_{t\rightarrow \infty} S(t)\leq  S^{**}.
	\end{equation}
	%if $\overline{\lim} (S+Y_1+Y_2+Y_{12})\leq M $ and therefore as $t \longrightarrow \infty$, $(S,Y_1,Y_2,Y_{12}) \longrightarrow E_2=(K(1-\frac{\mu_0}{b}),0,0,0)$.
\end{proposition}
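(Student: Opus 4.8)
The plan is to estimate $S$ from above by the solution of an associated scalar logistic equation. Since the initial data \eqref{initdata} are nonnegative and, as recalled right after \eqref{initdata}, the nonnegative cone is invariant under the flow of \eqref{submodel1}, we have $I_1(t),I_2(t),I_{12}(t)\ge 0$ for all $t$ in the (maximal) interval of existence. Discarding these nonnegative terms in the first equation of \eqref{submodel1} gives the differential inequality
\begin{equation*}
S'\le\Bigl(b-\mu_0-\frac{b}{K}\,S\Bigr)S,
\end{equation*}
whose right-hand side is the logistic nonlinearity with intrinsic rate $b-\mu_0>0$ and equilibrium value $K(1-\mu_0/b)=S^{**}$ (see \eqref{capac}).

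First I would note that $S(t)>0$ throughout the interval of existence: writing the first equation of \eqref{submodel1} as $S'=g(t)S$ with $g$ continuous along the solution, one gets $S(t)=S(0)\exp\bigl(\int_0^t g\,ds\bigr)>0$. Hence $v:=1/S$ is well defined and positive, and $v'=-S'/S^2$. Dividing the displayed inequality by $S^2$ and changing sign turns it into the \emph{linear} differential inequality
\begin{equation*}
v'\ge-(b-\mu_0)\,v+\frac{b}{K}.
\end{equation*}
Multiplying by the integrating factor $e^{(b-\mu_0)t}$ and integrating the resulting inequality $\bigl(e^{(b-\mu_0)t}v\bigr)'\ge \frac{b}{K}e^{(b-\mu_0)t}$ from $0$ to $t$ yields
\begin{equation*}
v(t)\ge\frac{1}{S(0)}e^{-(b-\mu_0)t}+\frac{b}{K(b-\mu_0)}\bigl(1-e^{-(b-\mu_0)t}\bigr).
\end{equation*}
Since $\frac{b}{K(b-\mu_0)}=\frac{1}{S^{**}}$ by \eqref{capac}, taking reciprocals gives exactly \eqref{colo} (which in particular shows that $S$ is bounded, so the solution is in fact global).

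Finally, \eqref{boundr1} and \eqref{limit} are immediate consequences of \eqref{colo}. For fixed $t\ge0$ the denominator on the right-hand side of \eqref{colo} is a convex combination of $1/S^{**}$ and $1/S(0)$ with weights $1-e^{-(b-\mu_0)t}$ and $e^{-(b-\mu_0)t}$, hence it is at least $1/\max\{S^{**},S(0)\}$; taking reciprocals proves \eqref{boundr1}. And because $b>\mu_0$ we have $e^{-(b-\mu_0)t}\to0$ as $t\to\infty$, so the right-hand side of \eqref{colo} converges to $S^{**}$, which gives \eqref{limit}.

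As for difficulties, there is essentially no deep obstacle here. The only points deserving a line of care are: (i) the invariance of the nonnegative cone, so that the $I$-terms may legitimately be dropped, which is exactly Proposition~2.1 of \cite{Haddad}; and (ii) the passage from the differential inequality for $v$ to the integral inequality, which is just integration of $\phi'\ge\psi$ and requires no comparison theorem. If one prefers, one may instead invoke the standard scalar comparison principle: the logistic Cauchy problem $u'=(b-\mu_0-\frac bK u)u$, $u(0)=S(0)$, has the explicit solution equal to the right-hand side of \eqref{colo}, and $S(t)\le u(t)$ on the common interval of existence.
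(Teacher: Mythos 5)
Your proof is correct and follows essentially the same route as the paper: drop the nonnegative infection terms to get the logistic differential inequality, pass to $1/S$ (equivalently, divide the integrating-factor form by the square of $Se^{-(b-\mu_0)t}$ as the paper does), integrate, and take reciprocals. Your write-up is in fact a bit more careful than the paper's on the positivity of $S$ and on deducing \eqref{boundr1} from the convex-combination structure of the denominator, but there is no difference in method.
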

\begin{proof}
	It follows from the first equation of \eqref{submodel1}
	\begin{equation*}
	S'-(b-\mu_0)S \leq-\frac{bS^2}{K},
	\end{equation*}
	%multiply both sides by $e^{-(b-\mu_0)}t$
	%	\begin{equation*}
	%(S'-(b-\mu_0)S)e^{-(b-\mu_0)t} \leq-\frac{bS^2}{K}e^{-(b-\mu_0)t}	
	%\end{equation*}
	which can be written as
	\begin{equation*}
	(Se^{-(b-\mu_0)t})' \leq - \frac{b}{K} e^{-(b-\mu_0)t}S^2.
	\end{equation*}
	
	Dividing both sides by $(S'e^{-(b-\mu_0)t})^2$ and integrating from $0$ to $t$ gives,
	%\begin{equation*}
	%\begin{split}
	%\int_0^t \frac{(Se^{-(b-\mu_0)t})'}{(S^2e^{-2(b-\mu_0)t}} \leq -\int_0^t b e^{(b-\mu_0)t}\\
	\begin{equation*}
	\frac{e^{(b-\mu_0)t}}{S} \geq \frac{b}{K(b-\mu_0)} (e^{(b-\mu_0)t}-1) + \frac{1}{S(0)},
	%\end{split}
	\end{equation*}
	which yields \eqref{colo}. Then relations \eqref{boundr1} and \eqref{limit} follow immendiately from \eqref{colo}.
\end{proof}
Another important property of the general system \eqref{submodel1} is the following.

\begin{proposition}[Global estimates]\label{pro:glob}
	If $(S,I_1,I_2,I_{12})(t)$ is a solution of \eqref{submodel1} with positive initial data  %$(S(0),Y_1(0),Y_2(0),Y_{12}(0)) \in \mathbb{R}_+^4$
	then
	\begin{equation}
	S(t)+I_1(t)+I_2(t)+I_{12}(t) \leq \max\{S(0)+I_1(0)+I_2(0)+I_{12}(0),\frac{bS_m}{\mu}\}
	\end{equation}
	for $t \geq 0$, where $\mu=\min_{0\le i\le 3} \mu_i$ and $S_m= \max\{ S^{**},S(0)\}.$
\end{proposition}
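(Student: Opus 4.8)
The plan is to track the aggregate quantity $W(t):=S(t)+I_1(t)+I_2(t)+I_{12}(t)$ and reduce the problem to a scalar logistic--linear differential inequality that can be integrated by comparison. First I would add the first four equations of \eqref{submodel1} and observe that every \emph{interaction} term cancels in pairs: the terms $\pm\alpha_jSI_j$ ($j=1,2$); the contribution $-(\beta_1+\beta_2+\alpha_3)SI_{12}$ from the $S$-equation against $\beta_1SI_{12}+\beta_2SI_{12}+\alpha_3SI_{12}$; the terms $\pm\eta_kI_kI_{12}$ ($k=1,2$); and $-\gamma_1I_1I_2-\gamma_2I_1I_2$ against $(\gamma_1+\gamma_2)I_1I_2$. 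This is exactly the behaviour one expects, since these quadratic terms merely redistribute individuals among the compartments. What remains is
$$W'=bS\Bigl(1-\frac{S}{K}\Bigr)-\mu_0S-\mu_1I_1-\mu_2I_2-\mu_3I_{12}.$$

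Next, since $\mu=\min_{0\le i\le3}\mu_i$, the loss terms are bounded by $-\mu_0S-\mu_1I_1-\mu_2I_2-\mu_3I_{12}\le-\mu W$; and since $1-S/K\le1$ for $S\ge0$ while $S(t)\le S_m$ by \eqref{boundr1}, the growth term obeys $bS(1-S/K)\le bS\le bS_m$. Combining these gives the scalar inequality
$$W'(t)\le bS_m-\mu W(t),\qquad t\ge0.$$
Finally I would compare $W$ with the solution $V$ of the linear ODE $V'=bS_m-\mu V$, $V(0)=W(0)$, namely $V(t)=\frac{bS_m}{\mu}+\bigl(W(0)-\frac{bS_m}{\mu}\bigr)e^{-\mu t}$. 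A standard comparison argument yields $W(t)\le V(t)$ for all $t\ge0$, and since $V$ is monotone with limit $bS_m/\mu$ one has $V(t)\le\max\{W(0),bS_m/\mu\}$, which is precisely the claimed bound.

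There is no real obstacle here. The only step demanding care is the bookkeeping in the cancellation of the quadratic interaction terms when summing the four equations; after that the argument is the elementary logistic-type a priori estimate combined with the comparison/Gronwall inequality. It is worth noting in passing that $\mu>0$ (all death rates being positive) is what makes the auxiliary linear ODE dissipative, so that the bound is uniform in $t$.
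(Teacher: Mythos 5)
Your proposal is correct and follows essentially the same route as the paper: sum the first four equations (the quadratic interaction terms cancel), bound the decay terms by $-\mu W$ and the growth term by $bS_m$ using \eqref{boundr1}, and integrate the resulting linear differential inequality $W'\le bS_m-\mu W$. The only cosmetic difference is that you invoke a comparison with the explicit solution of the linear ODE, whereas the paper multiplies by the integrating factor $e^{\mu t}$ and integrates directly; these are the same computation.
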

\begin{proof}
Summing up the first four equations of \eqref{submodel1} we obtain
	\begin{equation*}
	S'+I_1'+I_2'+I_{12}' \leq (b-\frac{bS}{K})S-\mu(S+I_1+I_2+I_{12}).
	\end{equation*}
	Let  $y(t)=S+I_1+I_2+I_{12}$, then
	\begin{equation*}
	%\begin{split}
	y' \leq bS_m-\mu y.\\
	%e^{\mu t}y'+e^{\mu t}\mu y \leq (b-\mu_0-\frac{bS}{K})Se^{\mu t}\\
	%(y e^{\mu t})'\leq (b-\mu_0-\frac{bS}{K})Se^{\mu t}.
	%\end{split}
	\end{equation*}
	%where $M'=(b-\frac{bS}{K})$.
	Multiplying both sides by $e^{\mu t}$ and integrating the above equation from $0$ to $t$ gives
	\begin{equation*}
		y(t)  \leq e^{-\mu t}y(0)+\frac{bS_m}{\mu}(1-e^{-\mu t}).
	%\end{split}
	\end{equation*}
	By \eqref{boundr1}, we have $	y(t)  \leq \max \{y(0),\frac{bS_m}{\mu}\}$ which proves our claim.
\end{proof}

Finally, in the global stability analysis given below in section~\ref{sec:global}, we shall need the following result established recently in \cite{SKTW18a}.
\begin{proposition}\label{G.Sprop}
Suppose that $f(t) \in L^p ([0,\infty))\cap C^1([0,\infty))$, where $p\geq 1,$ and the first $k$ higher derivatives are bounded: $f',  \ldots, f^{(k)}\in L^\infty([0,\infty))$. Then
	  $$
	  \lim_{t\to\infty}f(t)=\ldots= \lim_{t\to\infty}f^{(k-1)}(t)=0.$$
\end{proposition}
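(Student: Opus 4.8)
The plan is to argue by induction on the order of the derivative, with the crucial base case being a variant of Barbalat's lemma.

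For the base case I would show that $f\in L^p([0,\infty))$ together with $f'\in L^\infty([0,\infty))$ forces $f(t)\to 0$. A bounded derivative makes $f$ Lipschitz, hence uniformly continuous on $[0,\infty)$. If $f$ did not tend to $0$, there would be $\varepsilon>0$ and a sequence $t_n\to\infty$, which after passing to a subsequence we may take to be separated by at least $1$, with $|f(t_n)|\ge\varepsilon$; uniform continuity would then give a fixed $\delta\in(0,\tfrac12)$ with $|f|\ge\varepsilon/2$ on each interval $[t_n-\delta,t_n+\delta]$, and these intervals being pairwise disjoint we would get
\[
\int_0^\infty |f(s)|^p\,ds\ \ge\ \sum_{n}\int_{t_n-\delta}^{t_n+\delta}|f(s)|^p\,ds\ \ge\ \sum_n 2\delta\Bigl(\frac{\varepsilon}{2}\Bigr)^p=\infty,
\]
contradicting $f\in L^p$.

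For the inductive step, suppose we have already shown $\lim_{t\to\infty}f^{(j-1)}(t)=0$ for some $j$ with $1\le j\le k-1$; I would upgrade this to $\lim_{t\to\infty}f^{(j)}(t)=0$ using only that $f^{(j+1)}$ is bounded, which holds since $j+1\le k$. Put $M:=\|f^{(j+1)}\|_{L^\infty}$. For any $h>0$ and any $t\ge0$, Taylor's formula with Lagrange remainder applied to $f^{(j-1)}$ gives $f^{(j-1)}(t+h)=f^{(j-1)}(t)+h\,f^{(j)}(t)+\frac{h^2}{2}f^{(j+1)}(\xi)$ for some $\xi\in(t,t+h)$, whence
\[
|f^{(j)}(t)|\ \le\ \frac{|f^{(j-1)}(t+h)|+|f^{(j-1)}(t)|}{h}+\frac{hM}{2}.
\]
Given $\varepsilon>0$, first fix $h$ so small that $hM/2<\varepsilon/2$, and then, using $f^{(j-1)}(s)\to0$, choose $T$ so that $|f^{(j-1)}(s)|<h\varepsilon/4$ for all $s\ge T$; for $t\ge T$ the displayed inequality yields $|f^{(j)}(t)|<\varepsilon$. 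As $\varepsilon$ was arbitrary, $f^{(j)}(t)\to0$. Iterating this from $j=1$ up to $j=k-1$ gives $\lim_{t\to\infty}f(t)=\cdots=\lim_{t\to\infty}f^{(k-1)}(t)=0$.

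The main obstacle is really the base case: the inductive step is the elementary two-point Taylor estimate above and goes through mechanically, whereas the base case requires the measure-theoretic Barbalat-type argument, and the one point demanding care there is that the points $t_n$ can be thinned out enough to make the intervals $[t_n-\delta,t_n+\delta]$ disjoint (which is where the contradiction with integrability comes from).
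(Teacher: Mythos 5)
Your proof is correct. Note, however, that the paper itself gives no proof of this proposition: it is quoted verbatim from the authors' earlier work \cite{SKTW18a}, so there is no in-paper argument to compare yours against. On its own merits your two-stage argument is sound and is the standard route: the base case is exactly Barbalat's lemma adapted to $L^p$ (uniform continuity from the bounded derivative, thinning the sequence $t_n$ so the intervals $[t_n-\delta,t_n+\delta]$ are disjoint, and summing the resulting uniform lower bounds on $\int|f|^p$ to contradict integrability), and the inductive step is the classical Landau-type two-point Taylor estimate
$|f^{(j)}(t)|\le h^{-1}\bigl(|f^{(j-1)}(t+h)|+|f^{(j-1)}(t)|\bigr)+hM/2$, optimized by first fixing $h$ and then sending $t\to\infty$. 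The only points worth a passing word are that the hypothesis ``$f^{(k)}\in L^\infty$'' implicitly supplies the pointwise existence of $f^{(j+1)}$ needed for the Lagrange remainder, and that the degenerate case $\|f'\|_\infty=0$ (where $f$ is constant and $L^p$ forces $f\equiv 0$) should be set aside before choosing $\delta$; neither affects the validity of the argument.
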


%\begin{corollary}\label{cor:Lpbis}
%	Let $h\in  L^q ([0,\infty))$, where $q\ge1$, and let $h$ have the bounded derivatives $h^{(k)}$ of any order $k\ge1$. Then
%	$$
%	h^{(k)}\in  L^{2^kq} ([0,\infty)) \quad \text{and}\quad\lim_{t\to\infty}h^{(k)}(t)=0, \quad\quad\forall k\ge0.
%	$$
%\end{corollary}
%We conclude that the susceptible population is bounded, therefore the epidemic does not end.
%------------------------------------------%---------------------
%------------------%%%%%%%%%%%%%%%%%%%--------------------------------------
\section{Equilibrium points: the local stability analysis}\label{eqpoint}
%It follows from Bez\`{o}ut's theorem that \eqref{submodel1} has either (i) infinitely many or (ii) at most  16 distinct equilibria, counting the trivial point $E_0:=\mathbf{0}$. A simple analysis shows that under condition \eqref{assum}, (i) is not possible. Thus, $\mathrm{card}(\mathcal{E}(p))\le 16$.

\subsection{Basic properties}
In this section we identify all equilibria of the system \eqref{submodel1} in the case when
\begin{equation}\label{nonvanish}
\beta_1>0, \quad \beta_2>0,\quad \gamma_1+\gamma_2>0
\end{equation}
and determine their local stability properties. First, let us remark some useful balance relations which hold for any equilibrium point $Y=(Y_0,Y_1,Y_2,Y_3)$ of \eqref{submodel1}. Since we are only interested in nonnegative equilibrium states we always assume that
$$Y=(Y_0,Y_1,Y_2,Y_3)\ge0.
$$
Then we have
\begin{equation}\label{equil}
\begin{split}
(b(1-\frac{Y_0}{K})-\alpha_1Y_1-\alpha_2Y_2-(\beta_1+\beta_2+\alpha_3)Y_3-\mu_0)Y_0&=0\\
(\alpha_1Y_0 - \eta_1Y_3-\gamma_1Y_2 - \mu_1)Y_1+\beta_1Y_0Y_3&=0\\
(\alpha_2Y_0 - \eta_2Y_3-\gamma_2Y_1- \mu_2)Y_2+\beta_2Y_0Y_3&=0\\
(\alpha_3Y_0+ \eta_1Y_1+\eta_2Y_2-\mu_3)Y_3+(\gamma_1+\gamma_2)Y_1Y_2&=0 \end{split}
\end{equation}
Denote by $G_1:=(0,0,0,0)$ the trivial equilibrium state. Then
\begin{equation}\label{claim}
Y_0\ne0 \quad \text{unless $Y=G_1$.}
\end{equation}
 Indeed, if $Y_0=0$ then we have from the second equation of \eqref{equil} that $(\eta_1Y_3+\gamma_1Y_2 + \mu_1)Y_1=0$. But by the positivity assumption, $\eta_1Y_3+\gamma_1Y_2 + \mu_1\ge \mu_1>0$, hence $Y_1=0$. For the same reason we have $Y_2=0$, thus the last equation in \eqref{equil} yields
 $$\mu_3Y_3=(\gamma_1+\gamma_2)Y_1Y_2=0,
 $$
 hence $Y_3=0$ too. This proves that $Y=G_1$ and proves \eqref{claim}.

It follows that any nontrivial equilibrium state $Y\ne G_1$ must satisfy
$$
b(1-\frac{Y_0}{K})-\alpha_1Y_1-\alpha_2Y_2-(\beta_1+\beta_2+\alpha_3)Y_3-\mu_0=0
$$
which implies by \eqref{capac} and \eqref{hatalpha} the balance equation
\begin{equation}
	\alpha_1Y_1+\alpha_2Y_2+\hat{\alpha_3}Y_{3}=\frac{b}{K}(S^{**}-Y_0)
	\label{law1}
\end{equation}
Also, summing up  equations in \eqref{equil} we obtain
	\begin{eqnarray}
	\mu_1Y_1+\mu_2Y_2+\mu_3Y_{3}&=&\frac{b}{K}(S^{**}-Y_0)Y_0.
	\label{law2}
	\end{eqnarray}

Taking into account \eqref{claim}, the latter identities imply a priori bounds for $Y_0$:
\begin{lemma}\label{lem:equi}
Let $Y\ne G_1$ be a nontrivial equilibrium point of \eqref{submodel1}. Then
	\begin{equation}\label{Ystar}
	0<Y_0\le S^{**},
	\end{equation}
	and the  equality holds if and only if $Y_1=Y_2=Y_3=0$. Furthermore,
	\begin{equation}\label{Kineq}
	\sigma_1\le Y_0\le \min\{S^{**},\sigma_3\},
	\end{equation}
	unless $Y_0=S^{**}$.

\end{lemma}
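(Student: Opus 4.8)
The plan is to derive everything from the two balance identities \eqref{law1} and \eqref{law2}, the nonnegativity of the components of $Y$, and the ordering $\sigma_1<\sigma_2<\sigma_3$ postulated in \eqref{assum}; the differential equations themselves enter only through \eqref{equil}. Throughout I use that $\alpha_1,\alpha_2>0$ and $\hat{\alpha_3}=\alpha_3+\beta_1+\beta_2>0$, the latter because $\beta_1,\beta_2>0$ under the standing hypothesis \eqref{nonvanish}.

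First I would prove \eqref{Ystar}. Since each of $Y_1,Y_2,Y_3$ is nonnegative and the coefficients $\alpha_1,\alpha_2,\hat{\alpha_3}$ are positive, the left-hand side of \eqref{law1} is nonnegative; as $b,K>0$ this forces $S^{**}-Y_0\ge0$. Together with \eqref{claim}, which gives $Y_0>0$ whenever $Y\ne G_1$, this yields $0<Y_0\le S^{**}$. For the equality clause: if $Y_0=S^{**}$ then the right-hand side of \eqref{law1} vanishes, so the nonnegative sum $\alpha_1Y_1+\alpha_2Y_2+\hat{\alpha_3}Y_3$ is zero, whence $Y_1=Y_2=Y_3=0$; conversely, if $Y_1=Y_2=Y_3=0$ then \eqref{law1} reads $\frac{b}{K}(S^{**}-Y_0)=0$, and since $Y_0\ne0$ (otherwise $Y=G_1$) we get $Y_0=S^{**}$.

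The core of the argument is the scalar identity obtained by eliminating the common right-hand side between $Y_0$ times \eqref{law1} and the summed relation \eqref{law2}: subtracting, regrouping, and writing $\mu_i=\alpha_i\sigma_i$ for $i=1,2$ and $\mu_3=\hat{\alpha_3}\sigma_3$ gives
\[
\alpha_1(\sigma_1-Y_0)Y_1+\alpha_2(\sigma_2-Y_0)Y_2+\hat{\alpha_3}(\sigma_3-Y_0)Y_3=0.
\]
Now assume $Y_0\ne S^{**}$. By \eqref{Ystar} this means $Y_0<S^{**}$, and then \eqref{law1} forces at least one of $Y_1,Y_2,Y_3$ to be strictly positive. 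If $Y_0<\sigma_1$, then since $\sigma_1<\sigma_2<\sigma_3$ every factor $\sigma_i-Y_0$ is strictly positive, so the left-hand side above is strictly positive --- a contradiction; hence $Y_0\ge\sigma_1$. Symmetrically, if $Y_0>\sigma_3$ then every factor $\sigma_i-Y_0$ is strictly negative, so the left-hand side is strictly negative --- again a contradiction; hence $Y_0\le\sigma_3$. Combining with $Y_0\le S^{**}$ from \eqref{Ystar} gives $\sigma_1\le Y_0\le\min\{S^{**},\sigma_3\}$, which is \eqref{Kineq}.

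I do not expect a serious obstacle here; the argument is elementary sign analysis once the right linear combination of the balance laws is taken. The only point requiring care is the degenerate equilibrium with $Y_0=S^{**}$: there all infected compartments vanish, the displayed identity collapses to $0=0$, and no bound on $Y_0$ relative to $\sigma_1,\sigma_3$ can be extracted --- which is precisely why that case is excluded in \eqref{Kineq}. It is also worth confirming, before running the sign arguments, that the strict inequalities $\alpha_1,\alpha_2,\hat{\alpha_3}>0$ hold under \eqref{nonvanish}.
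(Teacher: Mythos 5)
Your proof is correct and follows essentially the same route as the paper: both arguments rest on the two balance identities \eqref{law1} and \eqref{law2}, the paper dividing them to express $Y_0$ as a weighted average of $\sigma_1,\sigma_2,\sigma_3$ while you cross-multiply to get the equivalent signed identity $\alpha_1(\sigma_1-Y_0)Y_1+\alpha_2(\sigma_2-Y_0)Y_2+\hat{\alpha_3}(\sigma_3-Y_0)Y_3=0$ and run the sign analysis. Your write-up is somewhat more explicit about the equality case of \eqref{Ystar} and the degenerate case $Y_0=S^{**}$, but there is no substantive difference in method.
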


\begin{proof}
Indeed, the first claim follows immediately from \eqref{law1}. Next, assuming that $Y_0\ne S^{**}$ and dividing  \eqref{law2} by  \eqref{law1} we get $Y_0=\frac{\mu_1Y_1+\mu_2Y_2+\mu_3Y_{3}}{\alpha_1Y_1+\alpha_2Y_2+\hat{\alpha_3}Y_{3}}$ which readily yields \eqref{Kineq}.
\end{proof}

Combining the above estimates we obtain from \eqref{law1} the following a priori bound on the coordinates of an arbitrary equilibrium point $Y$ distinct from $G_2:=(S^{**},0,0,0)$  that
	\begin{equation}\label{max}
	\|Y\|_{\infty}:=\max_{0\le i\le 3}Y_i\le \max\left\{\frac{b-\mu_0}{\alpha_1},\frac{b-\mu_0}{\alpha_2}, \frac{b-\mu_0}{\hat\alpha_3},\sigma_3\right\}
	\end{equation}
%Indeed,

\subsection{Equilibrium points of \eqref{submodel1}}
Note that \eqref{submodel1} always has the trivial equilibrium state
$$
G_1=(0,0,0,0).
$$
The first nontrivial equilibrium point is the disease-free equilibrium (or a healthy equilibrium) which is an equilibrium such that the disease is absent in all the patches. In the present notation the disease-free equilibrium corresponds to $I_1=I_2=I_{12}=0$, and it follows from \eqref{submodel1} that
$$
G_2=(S^{**},0,0,0).
$$
Then Lemma~\ref{lem:equi} shows that the value of the susceptible class for the healthy equilibrium state $G_2$ is \textit{the largest possible among all equilibrium points.}

Suppose that $Y_3=0$. Then it follows from \eqref{equil} that besides $G_2$ there exist exactly two equilibrium points with the presence of the first  or the second strains, given respectively by
\begin{equation}\label{eqpt3}
G_3  =\left(\sigma_1,\frac{b}{K\alpha_1}(S^{**}-\sigma_1), 0,0\right)\quad\text{when}\quad S^{**}>\sigma_1.
\end{equation}
and
\begin{equation}\label{eqpt4}
G_4 =\left(\sigma_2,0,\frac{b}{K\alpha_2}(S^{**}-\sigma_2), 0\right)\quad\text{when}\quad S^{**}>\sigma_2.
\end{equation}

Finally suppose that $Y$ be a nontrivial equilibrium point such that $Y_3\ne0$. Since $Y_0\ne0$, the second and the third equations in \eqref{equil} immediately imply that $Y_1\ne0$ and $Y_2\ne0$ as well. Thus $Y_3\ne0$ implies that $Y$ must have all positive coordinates. This equilibrium point is related to the coexistence of both strains with coinfection and is called the coexistence equilibrium point. We shall denote it by
$$
G_5=(S^*,I_1^*,I_2^*,I_{12}^*).
$$
Note that due to the complexity of our model it is difficult to find the coordinates of the $G_5$-type equilibrium points explicitly. It is also a priori unclear how many such coexistence equilibrium points can exist. We address this issue in a forthcoming paper.

%However, to study the mutualistic interaction of all infectious agents, we consider that must have all positive coordinates and for the local stability, the Jacobian matrix evaluated at this point, must have all negative eigenvalues.

\subsection{The trivial equilibrium point $G_1$}
The Jacobian matrix for $G_i=(0,0,0,0)$ is
$$
J=
\begin{bmatrix}
b-\mu_0 & 0 &0 & 0  \\
0 & -\mu_1 & 0 & \\
0 & 0 & -\mu_2 & 0 \\
0 & 0 & 0 & -\mu_3
\end{bmatrix}
$$
Since $b-\mu_0>0$, the trivial equilibrium point $G_1$ is always locally unstable. In fact, we have a  stronger observation.
\begin{proposition}
	If
	\begin{equation}\label{insE1}
	\kappa:=\limsup_{t \rightarrow \infty}(\alpha_1I_1+\alpha_2I_2+\hat\alpha_3 I_{12})<(b-\mu_0),
	\end{equation}%for large $t>t_0$
	then
	\begin{equation}\label{insE11}
	\liminf_{t \rightarrow \infty} S(t) \geq \frac{K}{b}(b-\mu_0-\kappa).
	\end{equation}
In particular, if $I_1,I_2,I_{12}\to 0$ as $t\to \infty$ then $S(t)$ is separated from zero.
\end{proposition}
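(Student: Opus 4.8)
The plan is to treat the first equation of \eqref{submodel1} as a perturbed logistic equation and conclude by a scalar comparison argument. Writing $g(t):=\alpha_1I_1(t)+\alpha_2I_2(t)+\hat\alpha_3I_{12}(t)$, the $S$-equation becomes
\begin{equation*}
S'=\Bigl(b-\mu_0-g(t)-\frac{b}{K}S\Bigr)S ,
\end{equation*}
and by the standing assumptions \eqref{initdata} together with the invariance of the nonnegative cone we have $S(t)>0$ for all $t\ge 0$.

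First I would fix $\varepsilon>0$ small enough that $a_\varepsilon:=b-\mu_0-\kappa-\varepsilon>0$; this is possible precisely because \eqref{insE1} guarantees $\kappa<b-\mu_0$. By the definition of $\limsup$ there is $T=T(\varepsilon)$ such that $g(t)\le\kappa+\varepsilon$ for all $t\ge T$, hence
\begin{equation*}
S'\ge a_\varepsilon S-\frac{b}{K}S^{2},\qquad t\ge T .
\end{equation*}
Next I would compare $S$ on $[T,\infty)$ with the solution $u$ of the genuine logistic equation $u'=a_\varepsilon u-\frac{b}{K}u^{2}$ subject to $u(T)=S(T)>0$. Since the right-hand side is locally Lipschitz and $u(T)=S(T)$, the standard comparison principle for scalar ODEs gives $S(t)\ge u(t)$ for all $t\ge T$, while the explicit logistic formula yields $\lim_{t\to\infty}u(t)=\frac{a_\varepsilon K}{b}$. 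Consequently
\begin{equation*}
\liminf_{t\to\infty}S(t)\ge\frac{a_\varepsilon K}{b}=\frac{K}{b}(b-\mu_0-\kappa-\varepsilon),
\end{equation*}
and letting $\varepsilon\downarrow 0$ establishes \eqref{insE11}. The final assertion is the special case $\kappa=0$: if $I_1,I_2,I_{12}\to0$ as $t\to\infty$ then \eqref{insE1} holds with $\kappa=0$, so \eqref{insE11} gives $\liminf_{t\to\infty}S(t)\ge\frac{K}{b}(b-\mu_0)=S^{**}>0$ by \eqref{capac}.

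I do not expect a genuine obstacle here; the only points requiring a little care are the passage from the $\limsup$ hypothesis \eqref{insE1} to a uniform upper bound for $g$ on a half-line $[T,\infty)$, and invoking the comparison principle with the correct strictly positive initial value $S(T)$ (rather than $S(0)$, which would only reproduce the weaker bound \eqref{colo}). As an alternative to introducing the comparison equation one can argue directly: on any set where $S(t)\le\frac{a_\varepsilon K}{b}-\delta$ one has $S'\ge\frac{b}{K}\delta\,S>0$, so $S$ cannot remain below that level indefinitely, and a short continuity argument promotes this to $\liminf_{t\to\infty}S(t)\ge\frac{a_\varepsilon K}{b}-\delta$ for every $\delta>0$.
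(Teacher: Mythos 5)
Your argument is correct and follows essentially the same route as the paper: both pass from the $\limsup$ hypothesis to a uniform bound $g(t)\le\kappa+\varepsilon$ on a half-line, reduce to the differential inequality $S'\ge(b-\mu_0-\kappa-\varepsilon)S-\frac{b}{K}S^2$, and conclude $\liminf_{t\to\infty}S(t)\ge\frac{K}{b}(b-\mu_0-\kappa-\varepsilon)$ before letting $\varepsilon\downarrow0$. The only cosmetic difference is that the paper integrates the Bernoulli-transformed inequality explicitly to obtain a closed-form lower bound for $S(t)$, whereas you invoke the scalar comparison principle together with the known asymptotics of the logistic equation; these are the same computation packaged differently.
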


\begin{proof}
By virtue of \eqref{insE1} we have for every $\kappa_1 \in (\kappa,(b-\mu_0))$ that there exist $t_1>0$ such that
$$\alpha_1I_1+\alpha_2I_2+\hat\alpha_3 I_{12} \leq \kappa_1
$$
holds for any $t \geq t_1$.
	It follows from the first equation of \eqref{submodel1} that
	$$S'(t)-(b-\mu_0-\kappa_1)S(t) \geq-\frac{bS(t)^2}{K},$$ for $t\geq t_1$,
	%multiply both sides by $e^{-(b-\mu_0)}t$
	%	\begin{equation*}
	%(S'-(b-\mu_0)S)e^{-(b-\mu_0)t} \leq-\frac{bS^2}{K}e^{-(b-\mu_0)t}	
	%\end{equation*}
	hence
	\begin{equation*}
	(S(t)e^{-(b-\mu_0-\kappa_1)t})' \geq - \frac{b}{K} e^{-(b-\mu_0-\kappa_1)t}S(t)^2.
	\end{equation*}
	Dividing both sides by $(S'e^{-(b-\mu_0-\kappa_1)t})^2$ and integrate from $t_1$ to $t$ gives,
		\begin{equation*}
	\frac{e^{(b-\mu_0-\kappa_1)(t-t_1)}}{S(t)} \leq \frac{b}{K(b-\mu_0-\kappa_1)} (e^{(b-\mu_0-\kappa_1)(t-t_1)}-1) + \frac{1}{S(t_1)},
		\end{equation*}
	which leads to
	\begin{equation*}
	S(t) \geq \frac{S(t_1)}{\frac{b}{K(b-\mu_0-\kappa_1)}S(t_1)(1-e^{-(b-\mu_0-\kappa_1)(t-t_1})+e^{-(b-\mu_0-\kappa_1)(t-t_1)}},\quad\text{for}\quad t \geq t_1
		\end{equation*}
	and gives
	$$\liminf_{t \rightarrow \infty} S(t) \geq \frac{K}{b}(b-\mu_0-\kappa_1).$$
	Since $ \kappa_1$ is an arbitrary number from $(\kappa,b-\mu_0),$ this implies \eqref{insE11}.
\end{proof}

\subsection{The disease-free equilibrium $G_2$}

As we know by Lemma~\ref{lem:equi}, the disease-free equilibrium $G_2=( S^{**}, 0,0,0)$ has the largest possible among all equilibrium points. The Jacobian matrix for $G_2=(S^{**},0,0,0)$ is
\[
J=
\begin{bmatrix}
-(b-\mu_0) & -\alpha_1S^{**} & -\alpha_2S^{**} & -\hat{\alpha_3}S^{**}  \\
0 & \alpha_1S^{**}-\mu_1 & 0 & \beta_1S^{**} \\
0 & 0 & \alpha_2S^{**}-\mu_2 & \beta_2S^{**} \\
0 & 0 & 0 & \alpha_3S^{**}-\mu_3
\end{bmatrix},
\]
and has all negative eigenvalues if $S^{**}\leq\sigma_1$ and \eqref{assum} holds. This implies

\begin{proposition}\label{pro:G2}
The disease free equilibrium point $G_2$  is locally  stable whenever $S^{**}\leq\sigma_1$ holds.
\end{proposition}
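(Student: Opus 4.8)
The plan is to read the spectrum of the Jacobian at $G_2$ directly, exploiting the fact that the matrix displayed just above the proposition is \emph{upper triangular}: every entry strictly below the main diagonal vanishes (the lower-triangular partials of the three infected equations all carry a factor $I_1$, $I_2$ or $I_{12}$, which are zero at $G_2$). Consequently its eigenvalues are exactly the diagonal entries
\[
\lambda_1=-(b-\mu_0),\quad \lambda_2=\alpha_1S^{**}-\mu_1,\quad \lambda_3=\alpha_2S^{**}-\mu_2,\quad \lambda_4=\alpha_3S^{**}-\mu_3,
\]
and by Lyapunov's indirect method (Hartman--Grobman) it suffices to verify that all four lie in the open left half-plane under the hypothesis $S^{**}\le\sigma_1$.

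First, $\lambda_1=-(b-\mu_0)<0$ by the standing assumption $b>\mu_0$. For $\lambda_2$ and $\lambda_3$ I would rewrite them via $\sigma_i=\mu_i/\alpha_i$ as $\lambda_2=\alpha_1(S^{**}-\sigma_1)$ and $\lambda_3=\alpha_2(S^{**}-\sigma_2)$; then $S^{**}\le\sigma_1$ gives $\lambda_2\le0$, and $S^{**}\le\sigma_1<\sigma_2$ (by \eqref{assum}) gives $\lambda_3<0$. For $\lambda_4$ I would invoke \eqref{hatalpha} and \eqref{sigma3}: since $\hat\alpha_3=\alpha_3+\beta_1+\beta_2\ge\alpha_3$ and, by \eqref{assum}, $S^{**}\le\sigma_1<\sigma_3=\mu_3/\hat\alpha_3$, we obtain $\alpha_3S^{**}\le\hat\alpha_3 S^{**}<\mu_3$, i.e. $\lambda_4<0$. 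Thus whenever $S^{**}<\sigma_1$ all eigenvalues are strictly negative, so $G_2$ is locally asymptotically stable.

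The only point requiring extra care — and what I expect to be the (mild) main obstacle — is the borderline case $S^{**}=\sigma_1$, where $\lambda_2=0$ and the linearization alone is inconclusive. Here one can either carry out a one–dimensional center-manifold reduction along the $I_1$-axis, where the quadratic couplings $\beta_1SI_{12}$, $(\gamma_1+\gamma_2)I_1I_2$ together with the logistic saturation drive the flow back toward $G_2$, or simply defer to the global stability result proved later in the paper, which covers the full range $S^{**}\le\sigma_1$ and in particular yields Lyapunov stability at the endpoint. In either case the conclusion of the proposition follows.
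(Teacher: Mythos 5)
Your argument is essentially the paper's own proof: the Jacobian at $G_2$ is upper triangular, so stability is read off from the diagonal entries $-(b-\mu_0)$, $\alpha_1S^{**}-\mu_1$, $\alpha_2S^{**}-\mu_2$, $\alpha_3S^{**}-\mu_3$, all strictly negative under $S^{**}<\sigma_1$ and \eqref{assum}. You are in fact more careful than the paper at the borderline $S^{**}=\sigma_1$, where $\lambda_2=0$ and the linearization is inconclusive: the paper simply asserts that all eigenvalues are negative for $S^{**}\le\sigma_1$ and only concedes in a later remark that one eigenvalue vanishes there, so your observation that the endpoint requires a nonlinear argument (e.g.\ the Lyapunov function of Proposition~\ref{G.SG2}) is an improvement rather than a gap.
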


\subsection{The equilibrium point with the presence of the first strain $G_3$}\label{sec:G3loc}
The local stability analysis  of equilibrium points $G_3$ and $G_4$ is more involved. First note that it follows from \eqref{eqpt3} that $G_3$ is nonnegative if and only if
$$
I_1^*:=\frac{b}{K\alpha_1}(S^{**}-\sigma_1)\ge0,
$$
and, moreover, $G_3=G_2$ when $I_1^*=0$.  Using \eqref{eqpt3}, we find the corresponding  Jacobian matrix evaluated at $G_3$:
$$
J=\begin{bmatrix}
A&\star\\
0&B
\end{bmatrix}=
\begin{bmatrix}
-b\frac{\sigma_1}{K} & -\alpha_1\sigma_1 & -\alpha_2\sigma_1 & -\hat{\alpha_3} \sigma_1  \\
\alpha_1I^*_1 & 0 & -\gamma_1I^*_1 & -\eta_1I^*_1+\beta_1\sigma_1 \\
0 & 0 & -\alpha_2(\sigma_2-\sigma_1)-\gamma_2I^*_1 & \beta_2\sigma_1 \\
0 & 0 & (\gamma_1+\gamma_2)I^*_1 & \alpha_3\sigma_1+\eta_1I^*_1-\mu_3
\end{bmatrix}
$$
where we partitioned  the Jacobian matrix into $2\times2$ blocks with
\begin{align*}
A&=
\begin{bmatrix}
-b\frac{\sigma_1}{K} & -\alpha_1\sigma_1\\
\alpha_1I^*_1 & 0
\end{bmatrix},
\qquad
B=
\begin{bmatrix}
-\alpha_2(\sigma_2-\sigma_1)-\gamma_2I^*_1 & \beta_2\sigma_1\\
(\gamma_1+\gamma_2)I^*_1 & \alpha_3\sigma_1+\eta_1I^*_1-\mu_3
\end{bmatrix}
\end{align*}
It follows that the equilibrium point $G_3$ is stable if and only if both $A$  and $B$ are stable. We have for the first block matrix
$$
\trace A=-b\frac{\sigma_1}{K} <0, \qquad \det A=\alpha_1^2 I_1^*\sigma_1>0,
$$
therefore $A$ is stable for any choice of parameters provided that $G_3$ exists and distinct of $G_2$ (i.e. $I_1^*>0$).

Next notice that the matrix $B$ is stable if and only if its trace is negative and the determinant is positive. Since the first diagonal element in $C$ is negative by  \eqref{assum}, and the anti-diagonal elements are positive, the positivity of the determinant implies that
\begin{equation}\label{Ccond}
B_{22}=\alpha_3\sigma_1+\eta_1I^*_1-\mu_3<0.
\end{equation}
Therefore, $\det B>0$  implies that $\trace B=B_{11}+B_{22}<0$, i.e. $B$ is stable. This shows that the block $B$ is stable if and only if  $\det B>0$ holds. In summary, we have

\begin{proposition}\label{pro:G3}
The equilibrium point with the presence of the first strain $G_3$ is locally  stable if and only if $I_1^*>0$ and
\begin{equation}\label{detB}
\det B=\det\begin{bmatrix}
-\alpha_2(\sigma_2-\sigma_1)-\gamma_2I^*_1 & \beta_2\sigma_1\\
(\gamma_1+\gamma_2)I^*_1 & \alpha_3\sigma_1+\eta_1I^*_1-\mu_3
\end{bmatrix}>0.
\end{equation}
%\begin{equation}\label{betacond1}
%\beta_2<\frac{b\left(S^{**}-\sigma_1\right)\left(\hat{\gamma_2}+\gamma_2\right) \left(\hat{\eta_1}-\eta_1\right)}{K\mu_1(\gamma_1+\gamma_2)},
%\end{equation}
%where
%$$\hat{\gamma_2}=\frac{K\alpha_1\alpha_2}{b(S^{**}-\sigma_1)}(\sigma_2-\sigma_1)$$
%$$\hat \eta_1=\frac{K\alpha_1\alpha_3}{b(S^{**}-\sigma_1)}(\sigma_3-\sigma_1). %\frac{K\alpha_1\alpha_3}{b(S^{**}-\sigma_1)}(\sigma_3-\sigma_1),
%$$
%Moreover \eqref{betacond1} guarantees that $\eta_1<\hat\eta_1.$
\end{proposition}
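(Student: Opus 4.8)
The plan is to exploit the block lower‑triangular structure of the Jacobian $J$ at $G_3$ displayed above: since the lower‑left $2\times2$ block vanishes, the spectrum of $J$ is the disjoint union of the spectra of the two diagonal blocks $A$ and $B$. Hence $G_3$ is locally asymptotically stable (i.e.\ all eigenvalues of $J$ have strictly negative real part) if and only if both $A$ and $B$ are Hurwitz; and, as already noted from \eqref{eqpt3}, $G_3$ is a nonnegative equilibrium distinct from $G_2$ exactly when $I_1^*>0$. So the proof reduces to examining $A$ and $B$ separately under the standing admissibility hypothesis \eqref{assum} and the nonvanishing assumptions \eqref{nonvanish}.

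For the block $A$ I would simply invoke the $2\times2$ Routh--Hurwitz criterion: a $2\times2$ real matrix is Hurwitz iff its trace is negative and its determinant is positive. Here $\trace A=-b\sigma_1/K<0$ and $\det A=\alpha_1^2 I_1^*\sigma_1$, which is strictly positive as soon as $I_1^*>0$ (using $\sigma_1>0$). Thus $A$ is automatically stable whenever $G_3$ is a genuine equilibrium distinct from $G_2$, with no extra conditions on the parameters.

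For the block $B$ the key observation is that \eqref{assum} gives $\sigma_1<\sigma_2$, so the $(1,1)$ entry $B_{11}=-\alpha_2(\sigma_2-\sigma_1)-\gamma_2 I_1^*<0$, while by \eqref{nonvanish} the anti‑diagonal entries $B_{12}=\beta_2\sigma_1>0$ and $B_{21}=(\gamma_1+\gamma_2)I_1^*>0$ are strictly positive, hence $B_{12}B_{21}>0$. Consequently $\det B=B_{11}B_{22}-B_{12}B_{21}>0$ can only hold if $B_{11}B_{22}>0$, and since $B_{11}<0$ this forces $B_{22}<0$; but then $\trace B=B_{11}+B_{22}<0$ automatically. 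Applying the $2\times2$ Routh--Hurwitz criterion once more, $B$ is stable iff $\det B>0$. For necessity one argues in the other direction: if $\det B<0$ the eigenvalues of $B$ are real of opposite sign, and if $\det B=0$ there is a zero eigenvalue, so in either case $G_3$ fails to be locally asymptotically stable. Combining the two block analyses yields the stated equivalence.

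The argument is largely bookkeeping once the block‑triangular decomposition is in place; the one point I would take care to spell out is precisely the sign chase in the previous paragraph, where \eqref{assum} and \eqref{nonvanish} are used to upgrade the single inequality ``$\det B>0$'' to the full Hurwitz condition for $B$ without checking the trace independently. This is also where the hypothesis is genuinely needed: dropping $\sigma_1<\sigma_2$ loses control of the sign of $B_{11}$, and the clean ``iff $\det B>0$'' characterization can then break down.
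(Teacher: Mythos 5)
Your argument is correct and follows essentially the same route as the paper: the block upper-triangular structure of the Jacobian reduces the problem to the two diagonal blocks, $A$ is handled by the $2\times 2$ Routh--Hurwitz test using $I_1^*>0$, and the sign chase (\eqref{assum} forces $B_{11}<0$ while the positive anti-diagonal entries force $B_{22}<0$ whenever $\det B>0$, making the trace condition automatic) is exactly the paper's argument. Your explicit treatment of the necessity direction ($\det B\le 0$ ruling out stability) is a small addition the paper leaves implicit, and the only cosmetic slip is calling the matrix ``lower-triangular'' when the vanishing block is the lower-left one, i.e.\ the matrix is block \emph{upper}-triangular.
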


\medskip

Let us consider the determinantal condition \eqref{detB} in more detail. In the Lotka-Volterra case \eqref{vanish} treated in \cite{SKTW18a}, one has $\gamma_i=\beta_j=0$, hence the corresponding determinant condition
$$
\det B_0=\det\begin{bmatrix}
-\alpha_2(\sigma_2-\sigma_1)& 0\\
0 & \alpha_3\sigma_1+\eta_1I^*_1-\mu_3
\end{bmatrix}>0
$$
becomes equivalent to a simpler inequality (cf. with \eqref{Ccond})
$$
I^*_1=\frac{b}{K\alpha_1}(S^{**}-\sigma_1)<\frac{\mu_3-\alpha_3\sigma_1}{\eta_1}.
$$
Coming back to the general case \eqref{nonvanish}, the determinant
$$
\Delta(\lambda):=\det\begin{bmatrix}
-\alpha_2(\sigma_2-\sigma_1)-\gamma_2\lambda & \beta_2\sigma_1\\
(\gamma_1+\gamma_2)\lambda & \alpha_3\sigma_1+\eta_1\lambda-\mu_3
\end{bmatrix}
$$
is a quadratic polynomial in $\lambda$ with a negative leading coefficient. Further, by virtue of \eqref{sigma3} and \eqref{assum} we have
$$
\Delta(0)=\alpha_2\alpha_3(\sigma_2-\sigma_1) \left(\frac{\mu_3}{\alpha_3}-\sigma_1\right)>
\alpha_2\alpha_3(\sigma_2-\sigma_1) \left(\sigma_3-\sigma_1\right)>0,
$$
hence the equation $\Delta(\lambda)$ has a unique positive root. We denote it by $\Lambda$. Then one can easily see that \eqref{detB} is equivalent to the inequality
\begin{equation}\label{I1ineq}
0<I_1^*=\frac{b}{K\alpha_1}(S^{**}-\sigma_1)<\Lambda.
\end{equation}
which is equivalent to
\begin{equation}
\sigma_1<S^{**}<\sigma_1+\frac{K\alpha_1}{b}\Lambda
\end{equation}
and under this condition the equilibrium point $G_3$ is stable.
%Using \eqref{capac}, we rewrite the latter inequality as the condition on the carrying capacity
%\begin{equation}\label{Kineq1}
%\frac{1}{K}>\frac{(b-\mu_0)-\Lambda\alpha_1}{b\sigma_1}.
%\end{equation}
%For example, if
%$
%\Lambda>\frac{b-\mu_0}{\alpha_1}
%$
%then \eqref{Kineq1} holds automatically for all $K$ implying that in this case $G_3$ is stable whenever it is nonnegative. This yields

%\begin{corollary}
%In the above notation, $\Lambda>\frac{b-\mu_0}{\alpha_1}$ and $S^{**}>\sigma_1$ then $G_3$ is stable.
%\end{corollary}

When $\Delta(I_1^*)\approx 0$ and negative, it is plausible to expect that $G_3$ bifurcates into an inner point of $G_5$-type. We shall consider this question in short in section~\ref{sec:transition} below.

\subsection{The equilibrium point with the presence of the second strain $G_4$}
Similarly to the above, $G_4$ is nonnegative if and only if
$$
I_2^*:=\frac{b}{K\alpha_2}(S^{**}-\sigma_2)\ge0.
$$
Note that if $G_4$ is nonnegative then $S^{**}\ge \sigma_2$, hence by virtue of \eqref{assum}, $G_3$ is nonnegative too. The Jacobian matrix evaluated at $G_4$ is
\begin{equation}\label{JG4}
J=
\begin{bmatrix}
-b\frac{\sigma_2}{K} & -\alpha_1\sigma_2 & -\alpha_2\sigma_2 & -\hat{\alpha_3} \sigma_2  \\
0 & \alpha_1(\sigma_2-\sigma_1)-\gamma_1I_2^* & 0 & \beta_1\sigma_2 \\
\alpha_2I_2^* & -\gamma_2I_2^* & 0 & -\eta_2I_2^*+\beta_2\sigma_2 \\
0 & (\gamma_1+\gamma_2)I_2^* & 0 & -\mu_3+\alpha_3\sigma_2+\eta_2I_2^*
\end{bmatrix}
\end{equation}
Note that,  the elementary row operation  of the matrix \eqref{JG4} do not affect the eigenvalues of this matrix. Therefore, after an obvious rearrangement, the stability of $J$ is equivalent to that of the following matrix
\begin{equation*}
\tilde{J}=\begin{bmatrix}
C&\star\\
0&D
\end{bmatrix}
\end{equation*}
with the diagonal $2\times 2$-blocks
\begin{equation}\label{J2G4}
C=\begin{bmatrix}
-b\frac{S}{K} & -\alpha_2\sigma_2\\
\alpha_2I_2^* & 0
\end{bmatrix},
\qquad
D=\begin{bmatrix}
 \alpha_1(\sigma_2-\sigma_1)-\gamma_1I_2^*  & \beta_1\sigma_2 \\
(\gamma_1+\gamma_2)I_2^* & -\mu_3+\alpha_3\sigma_2+\eta_2I_2^*
\end{bmatrix}
\end{equation}
So $J$ is stable if and only if the blocks \eqref{J2G4} are stable. The first block $C$ is stable provided $G_4$ is nonnegative. Thus, the stability of $G_4$ is equivalent to that of $D$. Similarly to the previous case, we have

\begin{proposition}\label{pro:G4}
The equilibrium point with the presence of the second strain $G_4$ is locally stable if and only if $S^{**}>\sigma_2$ and
\begin{equation}\label{detD}
\det D=\det\begin{bmatrix}
 \alpha_1(\sigma_2-\sigma_1)-\gamma_1I_2^*  & \beta_1\sigma_2 \\
(\gamma_1+\gamma_2)I_2^* & -\mu_3+\alpha_3\sigma_2+\eta_2I_2^*
\end{bmatrix}>0
\end{equation}
and
\begin{equation}\label{traceD}
\trace D=
 \alpha_1(\sigma_2-\sigma_1)-\gamma_1I_2^*   -\mu_3+\alpha_3\sigma_2+\eta_2I_2^*
<0.
\end{equation}

\begin{remark}
Note that if \eqref{vanish} is satisfied then $D$ is upper triangular and has an eigenvalue $\alpha_1(\sigma_2-\sigma_1)>0$, thus unstable. This shows that for the local stability of $G_4$, $\gamma_1$ must be larger an a priori lower bound. Since we consider \eqref{submodel1} as a suitable modification of the Lotka-Volterra case \eqref{vanish}, $G_4$ is an unstable point for small perturbations of the parameters $\gamma_i$ and $\beta_j$. The latter is completely consistent with the results of \cite{SKTW18a}.
\end{remark}

%\begin{equation}\label{betacond1}
%\beta_2<\frac{b\left(S^{**}-\sigma_1\right)\left(\hat{\gamma_2}+\gamma_2\right) \left(\hat{\eta_1}-\eta_1\right)}{K\mu_1(\gamma_1+\gamma_2)},
%\end{equation}
%where
%$$\hat{\gamma_2}=\frac{K\alpha_1\alpha_2}{b(S^{**}-\sigma_1)}(\sigma_2-\sigma_1)$$
%$$\hat \eta_1=\frac{K\alpha_1\alpha_3}{b(S^{**}-\sigma_1)}(\sigma_3-\sigma_1). %\frac{K\alpha_1\alpha_3}{b(S^{**}-\sigma_1)}(\sigma_3-\sigma_1),
%$$
%Moreover \eqref{betacond1} guarantees that $\eta_1<\hat\eta_1.$
\end{proposition}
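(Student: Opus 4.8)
The plan is to mirror the treatment of $G_3$ given in Proposition~\ref{pro:G3}. First I would record that $G_4$ is a nonnegative equilibrium distinct from $G_2$ exactly when $I_2^*:=\frac{b}{K\alpha_2}(S^{**}-\sigma_2)>0$, i.e. $S^{**}>\sigma_2$; in the degenerate case $I_2^*=0$ one has $G_4=G_2$ and stability is already covered by Proposition~\ref{pro:G2}. So assume $S^{**}>\sigma_2$ from now on.

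Next, the key structural observation: applying to the Jacobian \eqref{JG4} the simultaneous permutation of coordinates $(S,I_1,I_2,I_{12})\mapsto(S,I_2,I_1,I_{12})$ --- which is conjugation by a permutation matrix and hence a similarity --- brings it to the block upper-triangular form $\tilde J=\begin{bmatrix}C&\star\\0&D\end{bmatrix}$ with $C$ and $D$ the $2\times2$ blocks in \eqref{J2G4}. (This is what the sentence preceding the statement means by ``elementary row operations do not affect the eigenvalues''; one must perform the matching column operations as well.) Since the spectrum of a block-triangular matrix is the union of the spectra of its diagonal blocks, $G_4$ is locally asymptotically stable if and only if both $C$ and $D$ are Hurwitz.

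For the block $C$ one computes $\trace C=-b\sigma_2/K<0$ and $\det C=\alpha_2^2 I_2^*\sigma_2>0$ under the standing hypothesis $I_2^*>0$, so by the Routh--Hurwitz criterion for $2\times2$ matrices $C$ is automatically stable. Hence stability of $G_4$ is equivalent to stability of $D$, and again by Routh--Hurwitz the $2\times2$ matrix $D$ is Hurwitz precisely when $\trace D<0$ and $\det D>0$ --- which are exactly the conditions \eqref{traceD} and \eqref{detD}.

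I do not expect a genuine analytic obstacle: once the block-triangular reduction is written down, everything is linear algebra. The one point worth flagging --- and the reason $G_4$ requires an extra trace condition that was superfluous for $G_3$ --- is that the $(1,1)$ entry of $D$, namely $\alpha_1(\sigma_2-\sigma_1)-\gamma_1 I_2^*$, has indeterminate sign (it is strictly positive in the Lotka--Volterra limit \eqref{vanish}), so, contrary to the situation for the block $B$ at $G_3$, positivity of $\det D$ does not by itself force $\trace D<0$; the two inequalities are genuinely independent. It remains only to check the boundary case $S^{**}=\sigma_2$ for consistency with Proposition~\ref{pro:G2}, and to note --- as in the appended remark --- that for small $\gamma_i,\beta_j$ both \eqref{detD} and \eqref{traceD} fail, so $G_4$ is unstable in a neighborhood of the Lotka--Volterra regime.
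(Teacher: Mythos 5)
Your proposal is correct and follows essentially the same route as the paper: a permutation similarity brings the Jacobian \eqref{JG4} to the block-triangular form with blocks $C$ and $D$ of \eqref{J2G4}, the block $C$ is automatically Hurwitz when $I_2^*>0$, and the Routh--Hurwitz conditions on the $2\times2$ block $D$ give exactly \eqref{detD} and \eqref{traceD}. Your two added observations --- that the row operations must be accompanied by the matching column operations to be a similarity, and that the trace condition is genuinely independent here because the $(1,1)$ entry of $D$ has indeterminate sign (unlike the block $B$ at $G_3$) --- are both accurate and sharpen the paper's exposition.
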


%in  the second $2\times2$ block diagonal elements are negative if
%\begin{equation}\label{betacond2}
%\beta_1<\frac{b\left(S^{**}-\sigma_2\right)\left(\gamma_1-\hat{\gamma_1}\right) \left(\hat{\eta_2}-\eta_2\right)}{K\mu_2(\gamma_1+\gamma_2)},
%\end{equation}
%where
%$$
%\hat\eta_2=\frac{K\alpha_2\alpha_3}{b(S^{**}-\sigma_2)}(\sigma_3-\sigma_2)
%$$
%$$\hat{\gamma_1}=\frac{K\alpha_1\alpha_2}{b(S^{**}-\sigma_2)}(\sigma_2-\sigma_1)$$
%and trace of the second block is
%\begin{equation}\label{tracJG4}
%\alpha_1(\sigma_2-\sigma_1)-\gamma_1I_2^*-\alpha_3(\sigma_3-\sigma_2)+\eta_2I_2^*<0.
%\end{equation}
%
%Moreover, \eqref{betacond2} and \eqref{tracJG4} guarantees that
% $$\gamma_1>\hat{\gamma_1}\quad\text {and}\quad\eta_2 <\hat{\eta_2}
% $$

\section{Global Stability of Equilibrium points}\label{sec:global}

\subsection{The Lyapunov function}
Let us denote
$$
Y=  \left(
 \begin{array}{c}
   Y_0 \\
   Y_1\\
   Y_2 \\
   Y_3
 \end{array}
 \right)
 =
 \left(
 \begin{array}{c}S\\
 I_1\\I_2
 \\I_{12}
 \end{array}
 \right)
$$
and rewrite our system \eqref{submodel1} in this notation
\begin{equation}\label{gen}
 \begin{split}
 \frac{dY_k}{dt}&=F_{k}(Y)\cdot Y_k+H_k,\qquad k=0,1,2,3,
 \end{split}
 \end{equation}
  where we denote
   \begin{equation}\label{gen10}
   F(Y)=-q+AY,
   \end{equation}
  with
 \begin{equation}\label{gen11}
 F(Y)=
 \left(
 \begin{array}{c}
   F_0(Y)\\
  F_1(Y)\\
   F_2(Y)\\
   F_3(Y)
 \end{array}
 \right)
,\quad
q=
 \left(
 \begin{array}{c}
   -b+\mu_0 \\
   \mu_1\\
   \mu_2 \\
   \mu_3
 \end{array}
 \right)
,\quad
A=
  \left(
 \begin{array}{cccc}
   -\frac{b}{K}&-\alpha_1&-\alpha_2&-\hat{\alpha}_3 \\
   \alpha_1&0&-\gamma_2&-\eta_1\\
   \alpha_2&-\gamma_1&0&-\eta_2 \\
   \alpha_3&\eta_1&\eta_2&0
 \end{array}
 \right)
 ,\quad
 H=
 \left(
 \begin{array}{c}
   0\\
   \beta_1Y_0Y_3\\
   \beta_2Y_0Y_3\\
   (\gamma_1+\gamma_2)Y_1Y_2
 \end{array}
 \right)
 \end{equation}
Then $Y^*=(Y^*_0,Y^*_1,Y^*_2,Y^*_3)$ is an equilibrium point of \eqref{submodel1} if and only if
 \begin{equation}\label{FY0}
Y^*_iF_i(Y^*)=-H_i(Y^*), \qquad 0\le i\le 3.
\end{equation}

We associate with $Y^*$ the Lyapunov function
$$
v_{Y^*}(Y_0,Y_1,Y_2,Y_3)=\sum_{i=0}^3 (Y_i-Y_i^*\ln Y_i).
$$
The derivative computations are slightly different from the Lotka-Volterra case considered previously and the resulting function contains both the $\gamma_i$, $\beta_j$ and the $H$-terms. More precisely, using consequently \eqref{gen10}, \eqref{gen11} and \eqref{FY0}  we obtain for the time derivative of $v_{Y^*}$ along any integral trajectory of \eqref{gen}
\begin{equation}
\begin{split}\label{Fequa}
\frac{d}{dt}v_{Y^*}:= (\nabla v_{Y^*})^T\,\frac{dy}{dt}
=& \sum_{i=0}^3 \frac{Y_i-Y_i^*}{Y_i}(F_i(Y)Y_i+H_i(Y))\\
=& \sum_{i=0}^3 \frac{Y_i-Y_i^*}{Y_i}((F_i(Y)-F_i(Y^*))Y_i+F_i(Y^*)Y_i+H_i(Y))\\
=& \sum_{i,j=0}^3 A_{ij}(Y_i-Y_i^*)(Y_j-Y^*_j)+
\sum_{i=0}^3(Y_i-Y_i^*)F_i(Y^*)+\sum_{i=1}^3\frac{Y_i-Y_i^*}{Y_i}H_i(Y)\\
=&
-\frac{b}{K}(Y_0-Y_0^*)^2-(\gamma_1+\gamma_2)(Y_1-Y_1^*)(Y_2-Y_2^*)-
(\beta_1+\beta_2)(Y_0-Y_0^*)(Y_3-Y_3^*)\\
&+\sum_{i=0}^3Y_iF_i(Y^*)+\sum_{i=1}^3H_i(Y^*)+\frac{Y_i-Y_i^*}{Y_i}H_i(Y)\\
\end{split}
\end{equation}
Using \eqref{gen11}, we find further that
\begin{equation}\label{Lyap1}
\frac{d}{dt}v_{Y^*}=
-\frac{b}{K}(Y_0-Y_0^*)^2+\sum_{i=0}^3Y_iF_i(Y^*)+  \Phi,
\end{equation}
where
\begin{equation}\label{Lyap2}
\Phi=(\gamma_1+\gamma_2)\left(Y_1Y_2^*+Y_1^*Y_2-\frac{Y_3^*Y_1Y_2}{Y_3}- Y_1^*Y_2^*\right)
-Y_0Y_3\left(\beta_1\frac{Y_1^*}{Y_1}+\beta_2\frac{Y_2^*}{Y_2}\right)
+(\beta_1+\beta_2)(Y_3Y_0^*+Y_0Y_3^*-Y_0^*Y_3^*)
\end{equation}
Note that in our notation, see \eqref{gen11}, all function $H_i(Y)$ are nonnegative for any nonnegative $Y$, hence it follows from \eqref{FY0} that for any equilibrium point
\begin{equation}\label{FY0neg}
F_i(Y^*)\le0.
\end{equation}
%Therefore, \textit{if $Y^*$ is an $F$-stable point of \eqref{gen} then it is Lyapunov stable}:
%\begin{equation}\label{Lyap}
%\frac{d}{dt}v_{Y^*}(y(t))\le 0.
%\end{equation}
Therefore the sign of the derivative of the Lyapunov function depends on the sign of $\Phi$. Below we apply the above computations to global stability results for the first two equilibrium points.

\subsection{Global stability of equilibrium point $G_2$}
First we consider the disease free equilibrium point. Recall that by Proposition~\ref{pro:G2} the point $G_2$ is \textit{locally} stable if and only if
\begin{equation}\label{conditionG2}
	S^{**}\leq \sigma_1
\end{equation}
holds. Remarkably, the latter condition also implies the global stability. Furthermore, in the $G_2$-case we are able to establish a global stability result which guarantees that the disease can not invade and go extinct in small populations. In particular, the following result shows that disease cannot persist in a small population.

\begin{proposition}\label{G.SG2}
	Let \eqref{conditionG2} be satisfied. Then the equilibrium point $G_2$ is globally asymptotically stable i.e
	\begin{align*}
	\lim_{t\to\infty}I_1(t)&=
	\lim_{t\to\infty}I_2(t)=
	\lim_{t\to\infty}I_{12}(t)=0,\\
	\lim_{t\to\infty} S(t)&= S^{**},
	\end{align*}
\end{proposition}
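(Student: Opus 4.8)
The plan is to run the Lyapunov function $v_{Y^*}$ introduced above at $Y^*=G_2=(S^{**},0,0,0)$. Since $Y_1^*=Y_2^*=Y_3^*=0$, this function degenerates into the globally defined (no singularity in the infected variables)
$$
v_{G_2}(S,I_1,I_2,I_{12})=S-S^{**}\ln S+I_1+I_2+I_{12},
$$
which is bounded below on $\{S>0\}$; being nonincreasing (shown below) and finite at $t=0$ because $S(0)>0$, it also confines $S(t)$ to a fixed compact subinterval of $(0,\infty)$. First I would specialize \eqref{Lyap1}--\eqref{Lyap2} to $Y^*=G_2$: using \eqref{capac} one checks $F_0(G_2)=0$, $F_i(G_2)=\alpha_i(S^{**}-\sigma_i)$ for $i=1,2$, and $F_3(G_2)=\alpha_3S^{**}-\mu_3$, while $\Phi$ collapses to $(\beta_1+\beta_2)S^{**}I_{12}$. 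Collecting the $I_{12}$-terms and using \eqref{sigma3} yields the clean identity
$$
\frac{d}{dt}v_{G_2}=-\frac{b}{K}(S-S^{**})^2+\alpha_1(S^{**}-\sigma_1)I_1+\alpha_2(S^{**}-\sigma_2)I_2+\hat\alpha_3(S^{**}-\sigma_3)I_{12},
$$
which, by \eqref{conditionG2} together with \eqref{assum} ($\sigma_1<\sigma_2<\sigma_3$), is $\le0$.

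Next, $v_{G_2}(t)$ is nonincreasing and bounded below, hence converges to a finite limit $v_\infty$; integrating the displayed identity then shows that each of the nonnegative functions $(S-S^{**})^2$, $I_2$, $I_{12}$ (and also $I_1$ in the case $S^{**}<\sigma_1$) belongs to $L^1([0,\infty))$. Boundedness of $S,I_1,I_2,I_{12}$ from Proposition~\ref{pro:glob} makes the right-hand sides of \eqref{submodel1}, and by one more differentiation their derivatives, bounded, so these functions are $C^2$ with bounded first and second derivatives. Applying Proposition~\ref{G.Sprop} with $p=2$, $k=2$ to $S-S^{**}$ gives $S(t)\to S^{**}$ and $S'(t)\to0$; applying it with $p=1$, $k=1$ to $I_2$ and $I_{12}$ gives $I_2(t)\to0$ and $I_{12}(t)\to0$.

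It remains to prove $I_1(t)\to0$, and this is the step I expect to be the main obstacle: when $S^{**}=\sigma_1$ the $I_1$-term drops out of $\frac{d}{dt}v_{G_2}$, so no integrability of $I_1$ is directly available. The plan is to exploit the convergence of the Lyapunov function itself. Since $S-S^{**}\ln S\to S^{**}-S^{**}\ln S^{**}$ and $I_2,I_{12}\to0$, the relation $v_{G_2}(t)\to v_\infty$ forces $I_1(t)$ to converge to $L:=v_\infty-S^{**}+S^{**}\ln S^{**}\ge0$. Passing to the limit in the first equation of \eqref{submodel1}, rewritten as $S'=\tfrac{b}{K}(S^{**}-S)S-(\alpha_1I_1+\alpha_2I_2+\hat\alpha_3I_{12})S$, and using $S\to S^{**}$, $S'\to0$, $I_2,I_{12}\to0$, we get $0=-\alpha_1LS^{**}$, hence $L=0$ and $I_1(t)\to0$. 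Together with the previous paragraph this proves global asymptotic stability of $G_2$. The only genuinely delicate point is this concluding limit argument (and, underneath it, the fact that $S'\to0$, for which the general $k$ in Proposition~\ref{G.Sprop} is exactly what is needed); the rest is the bookkeeping of substituting $G_2$ into \eqref{Lyap1}--\eqref{Lyap2}.
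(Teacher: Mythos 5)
Your proposal is correct and follows essentially the same route as the paper: the same Lyapunov function $v=S-S^{**}\ln S+I_1+I_2+I_{12}$, the same computation giving $v'=-\frac{b}{K}(S-S^{**})^2+\sum_i\alpha_i(S^{**}-\sigma_i)I_i$ (with $\hat\alpha_3$ for the coinfected term), integration plus Proposition~\ref{G.Sprop} for $S\to S^{**}$ and $I_2,I_{12}\to0$, and the passage to the limit in the first equation of \eqref{submodel1} (using $S'\to0$) to recover $I_1\to0$ in the borderline case $S^{**}=\sigma_1$. Your intermediate step of deducing that $I_1$ converges from the convergence of $v$ itself is a harmless minor variation; the paper concludes $\alpha_1I_1\to0$ directly from the limit of the combination in the $S$-equation.
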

\begin{proof}
We have for the Lyapunov function of $Y^*=G_2=(S^{**}, 0, 0, 0)$:
	\begin{equation}\label{veq1.1}
	v(t):=S- S^{**}\ln S+I_1+I_2+I_{12}.
	\end{equation}
We have $F_0(G_2)=0$ and $F_i(G_2)=\alpha_i S^{**}-\mu_i$, $1\le i\le 3$, and also $H_i(G_2)=0$ for $1\le i\le 3$. Substituting this into \eqref{Lyap1} and \eqref{Lyap2} we obtain
	\begin{equation}\label{veq1.2}
\begin{split}
v'(t)&=-\frac{b}{K}(S^{**}-S)^2-(\mu_1- \alpha_1S^{**}) I_1-(\mu_2- \alpha_2S^{**}) I_2 -\left(\mu_3-\hat{\alpha_3}S^{**}\right)I_{12}\\
&=-\frac{b}{K}(S^{**}-S)^2-\alpha_1(\sigma_1-S^{**}) I_1-\alpha_2(\sigma_2-S^{**}) I_2 -\hat{\alpha_3}\left(\sigma_3-S^{**}\right)I_{12}\\
&\le0.
\end{split}
\end{equation}
First suppose that we have the strong inequality $S^{**}>\sigma_1$. Then $\sigma_i-S^{**}$ are nonzero and strongly positive for any $i$. Integrating \eqref{veq1.2} over $[0,\infty]$ and using the fact that by Proposition~\ref{pro:glob} all $S$, $Y_1$, $Y_2$, $Y_{12}$ are bounded,   we obtain
	%\begin{equation}
	%\begin{split}	v(T)&=v(t)-\frac{\alpha_1b}{K}\int_{t}^{T}(S-S^*)^2d\tau- \alpha_1(\sigma_1-S^*)\int_{t}^{T}I_1(\tau)d\tau-\alpha_2(\sigma_2-S^*)\int_{t}^{T}I_2(\tau)d\tau-\left(\mu_3-(\beta_1+\beta_2+\alpha_3)S^{**}\right)\int_{t}^{T}I_{12}(\tau)d\tau.
%	\end{split}
%
%	As $v$ is bounded from below on $[0,\infty]$ we have
 $$
 \int_{0}^{\infty}(S-S^*)^2d\tau <\infty,\qquad \text{and}\qquad
 \int_{0}^{\infty}|I_k|d\tau=\int_{0}^{\infty}I_kd\tau <\infty \quad \text{for } I_k=I_1,I_2,I_{12}.
	$$
Using again the boundedness of  $S$, $Y_1$, $Y_2$, $Y_{12}$, in virtue of the system \eqref{submodel1}, their derivative are also bounded. Applying Proposition~\ref{G.Sprop} we conclude that $S$ converges to $S^*$ and $I_1,I_2$, $I_{12}$ converge to zero.	

Now suppose that $S^{**}=\sigma_1$. Since $\sigma_i-S^{**}>0$ for $i=2,3$, the above argument implies that $S$ converges to $S^*$ and $I_2$, $I_{12}$ converge to zero. Let us show that  $\lim_{t\to\infty}I_1=0.$ Since $\lim_{t \rightarrow \infty}S(t)$ exist, then by Proposition~\ref{G.Sprop} , $\lim_{t\to \infty } S'(t) \rightarrow 0$. Therefore the first equation in \eqref{submodel1} implies by virtue of $\lim_{t \rightarrow \infty}S(t)=S^{**}\ne0$ that
	$$
\frac{b}{K}(S(t)-S^{**}) -\alpha_1I_1(t)-\alpha_2I_2(t) -\hat\alpha_3 I_{12}(t) \rightarrow 0,
	$$
	which implies  $\lim_{t\to \infty}I_1(t)=0$ and finishes the proof.
\end{proof}
\begin{remark}
It is interesting to note that when $S^{**}=\sigma_1$  we have no local stability because one eigenvalue is equal to zero. But despite of this due to non-linear character of the problem linear term gives the global asymptotic stability in this case.
\end{remark}

\subsection{Global stability of equilibrium point $G_3$}
\begin{proposition}\label{G.SG3}
	Let
	\begin{equation}\label{g3.1}
	S^{**}>\sigma_1
	\end{equation}
	and
	\begin{equation}\label{conditionG3}
	I_1^*=\frac{b}{K\alpha_1}(S^{**}-\sigma_1)\le \min\left\{
\frac{\alpha_2(\sigma_2-\sigma_1)}{\gamma_1}, \,
\frac{\hat{\alpha}_3(\sigma_3-\sigma_1)}{\eta_1}
\right\},
\end{equation}
then the equilibrium point $G_3$ is globally stable i.e
	\begin{align*}
	\lim_{t\to\infty}I_2(t)&=
	\lim_{t\to\infty}I_{12}(t)=0,\qquad \lim_{t\to\infty} S(t)= S^*,\quad
	\lim_{t\to\infty} I_1(t)=I_1^*%\frac{b}{K\alpha_1}(S^{**}-\sigma_1).
	\end{align*}
\end{proposition}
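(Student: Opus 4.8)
The plan is to follow the Lyapunov scheme of Proposition~\ref{G.SG2}, now attaching to $G_3=(\sigma_1,I_1^*,0,0)$ the function
\[
v_{G_3}(Y)=(Y_0-\sigma_1\ln Y_0)+(Y_1-I_1^*\ln Y_1)+Y_2+Y_3 ,
\]
proving $\frac{d}{dt}v_{G_3}\le0$ under \eqref{g3.1}--\eqref{conditionG3}, integrating, and finishing with Proposition~\ref{G.Sprop}. For the derivative I would specialize \eqref{Lyap1}--\eqref{Lyap2} to $Y^*=G_3$: since $Y_2^*=Y_3^*=0$, most of $\Phi$ drops out, and \eqref{equil} gives $F_0(G_3)=F_1(G_3)=0$ together with $H_1(G_3)=H_2(G_3)=H_3(G_3)=0$. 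Substituting these, together with $F_2(G_3)=\alpha_2\sigma_1-\gamma_2I_1^*-\mu_2$ and $F_3(G_3)=\alpha_3\sigma_1+\eta_1I_1^*-\mu_3$ read off from \eqref{submodel1}, and using $\mu_i=\alpha_i\sigma_i$, $\mu_3=\hat\alpha_3\sigma_3$, $b-\mu_0=\frac bK S^{**}$ while collecting the surviving $(\gamma_1+\gamma_2)$- and $(\beta_1+\beta_2)$-contributions of $\Phi$, I expect to land on
\[
\frac{d}{dt}v_{G_3}=-\frac bK(Y_0-\sigma_1)^2-\bigl(\alpha_2(\sigma_2-\sigma_1)-\gamma_1I_1^*\bigr)Y_2-\bigl(\hat\alpha_3(\sigma_3-\sigma_1)-\eta_1I_1^*\bigr)Y_3-\beta_1I_1^*\,\frac{Y_0Y_3}{Y_1}.
\]
Hypothesis \eqref{conditionG3} says exactly that the two bracketed coefficients are nonnegative, so $\frac{d}{dt}v_{G_3}\le0$ along every positive trajectory. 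This identity is the computational heart of the argument; the only real care is the bookkeeping of the $\Phi$-terms.

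Next I would integrate: $v_{G_3}$ is bounded below (each summand is), hence, being nonincreasing, converges, and by Proposition~\ref{pro:glob} the trajectory and, via \eqref{submodel1}, its derivatives stay bounded. Integrating the identity over $[0,\infty)$ then gives $\int_0^\infty(Y_0-\sigma_1)^2\,dt<\infty$ and $\int_0^\infty\frac{Y_0Y_3}{Y_1}\,dt<\infty$ (the latter since $\beta_1I_1^*>0$, by \eqref{nonvanish} and \eqref{g3.1}). Proposition~\ref{G.Sprop} applied to $f=Y_0-\sigma_1\in L^2$ — whose first two derivatives are bounded — yields $Y_0(t)\to\sigma_1$ and $Y_0'(t)\to0$; then, because $Y_1$ is bounded above (Proposition~\ref{pro:glob}) and $Y_0\ge\sigma_1/2$ for large $t$, the second integral forces $\int_0^\infty Y_3\,dt<\infty$, hence $Y_3(t)\to0$ by Proposition~\ref{G.Sprop}.

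To finish, for $Y_2$ I would argue from the $I_2$-equation of \eqref{submodel1} rather than from the (possibly degenerate) Lyapunov coefficient: since $Y_0\to\sigma_1<\sigma_2$, one has $\alpha_2Y_0-\eta_2Y_3-\gamma_2Y_1-\mu_2\le\alpha_2(Y_0-\sigma_2)\le-\delta<0$ for large $t$, while $\beta_2Y_0Y_3\in L^1([0,\infty))$ (from $\int_0^\infty Y_0Y_3\,dt<\infty$ and the upper bound on $Y_1$); the differential inequality $Y_2'\le-\delta Y_2+\beta_2Y_0Y_3$ then gives $Y_2(t)\to0$ by the integrating-factor estimate. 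Finally, with $Y_0\to\sigma_1$, $Y_2,Y_3\to0$ and $Y_0'\to0$, rewriting the first equation of \eqref{submodel1} as $\alpha_1Y_1=(b-\mu_0)-\frac bK Y_0-\alpha_2Y_2-\hat\alpha_3Y_3-\frac{Y_0'}{Y_0}$ shows $\alpha_1Y_1(t)\to(b-\mu_0)-\frac bK\sigma_1=\frac bK(S^{**}-\sigma_1)=\alpha_1I_1^*$, i.e.\ $Y_1(t)\to I_1^*$.

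The genuine obstacle is not the Lyapunov inequality but the borderline case in which the coefficient of $Y_2$ or of $Y_3$ in the identity above vanishes, so that $v_{G_3}$ no longer controls $\int Y_2$ (resp.\ $\int Y_3$); phrasing the last two limits through the individual equations — the sign of $\alpha_2Y_0-\mu_2$ for $Y_2$, and the factor $Y_0$ inside $\frac{Y_0Y_3}{Y_1}$ for $Y_3$ — is precisely what makes the argument go through in that degenerate situation, in the same spirit as the borderline case $S^{**}=\sigma_1$ at the end of the proof of Proposition~\ref{G.SG2}.
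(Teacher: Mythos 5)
Your proposal is correct, and its core — the Lyapunov function $v=S-\sigma_1\ln S+I_1-I_1^*\ln I_1+I_2+I_{12}$, the derivative identity (which matches the paper's \eqref{veq3.2} exactly, including the term $-\beta_1 I_1^*\,SI_{12}/I_1$), the integration over $[0,\infty)$, and the appeal to Proposition~\ref{G.Sprop} plus the first equation of \eqref{submodel1} to recover $I_1\to I_1^*$ — is the same as the paper's proof. Where you genuinely deviate is the endgame for $I_2$ and $I_{12}$: the paper reads $\int I_2\,dt<\infty$ and $\int I_{12}\,dt<\infty$ directly off the two bracketed coefficients in $v'$, which requires the \emph{strict} inequality in \eqref{conditionG3}, and then dismisses the equality case with a one-line reference to the argument for $G_2$. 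You instead extract $\int I_{12}\,dt<\infty$ from the always-present term $\beta_1 I_1^*\,SI_{12}/I_1$ (using $\beta_1>0$ from \eqref{nonvanish}, $S\to\sigma_1>0$ and the boundedness of $I_1$), and you get $I_2\to0$ from the differential inequality $I_2'\le\alpha_2(S-\sigma_2)I_2+\beta_2 SI_{12}$ with an $L^1$ forcing term. This buys a uniform treatment of the borderline case where one of the coefficients in \eqref{conditionG3} vanishes — precisely the case the paper leaves as an analogy — at the modest cost of invoking two extra elementary facts (that $f\in L^1$, $f'$ bounded forces $f\to0$ applied to $I_{12}$, and the integrating-factor decay estimate for $I_2$). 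Both steps are sound; the only standing caveat, shared with the paper, is that the logarithmic terms in $v$ require the trajectory to have $S,I_1>0$, so the statement should be read for positive initial data in those components.
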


\begin{remark}\label{R3}
Note that the global stability condition \eqref{conditionG3} coincides with the local stability conditions \eqref{detB} when \eqref{vanish} is satisfied. In the general case, \eqref{conditionG3} implies \eqref{detB}. Indeed, note that in notation of Section~\ref{sec:G3loc}, we have by \eqref{conditionG3}
$$
B_{11}+B_{21}=
-\alpha_2(\sigma_2-\sigma_1)-\gamma_2I^*_1 +(\gamma_1+\gamma_2)I^*_1=
-\alpha_2(\sigma_2-\sigma_1)+\gamma_1I^*_1\le 0,
$$
and similarly
$$
B_{12}+B_{22}=\beta_2\sigma_1+ \alpha_3\sigma_1+\eta_1I^*_1-\mu_3\le
(\beta_2+ \alpha_3)\sigma_1-\hat{\alpha}_3\sigma_1=-\beta_1\sigma_1<0
$$
hence $B_{22}<-B_{12}<0$, and $B_{11}<-B_{21}<0$, which implies
$
B_{22}B_{11}>B_{12}B_{21},
$
therefore $\det B>0$. The latter implies by Proposition~\ref{pro:G3} that $G_3$ is locally stable.
\end{remark}

\begin{proof}
In this case, we $Y^*=G_3=(\sigma_1, I_1^*, 0, 0)$, where $I_1^*:=\frac{b}{K\alpha_1}(S^{**}-\sigma_1)$, and  the corresponding Lyapunov function is given by
	\begin{equation}\label{veq3.1}
	v(t):=S- \sigma_1\ln S+I_1-I_1^*\ln I_1+I_2+I_{12}
	\end{equation}
Substituting this into \eqref{Lyap1} and \eqref{Lyap2} and using \eqref{conditionG3} we obtain
\begin{equation}\label{veq3.2}	
\begin{split}
v'(t)&=-\frac{b}{K}(\sigma_1-S)^2-\left(\alpha_2(\sigma_2-\sigma_1) -\gamma_1I_1^*\right)I_2 -\left(\hat{\alpha}_3(\sigma_3-\sigma_1) -\eta_1I_1^*\right)I_{12} -\beta_1\frac{I_1^*}{I_1}SI_{12}\le 0.
\end{split}
	\end{equation}
We suppose first that the strong inequality in \eqref{conditionG3} holds. Then as above, integrating \eqref{veq3.2} over $[0,\infty]$ we obtain
 $$\int_{0}^{\infty}(S-\sigma_1)^2d\tau <\infty,\qquad
 %\int_{0}^{\infty}I_1d\tau <\infty,\quad
 \int_{0}^{\infty}I_2d\tau <\infty,\quad
 \int_{0}^{\infty}I_{12}d\tau <\infty.
	$$
therefore by Proposition \ref{G.Sprop}, $S$ converges to $\sigma_1$ and $I_2$, $I_{12}$ converge to zero.
Now we consider the convergence of $I_1$.
Arguing as in the proof of Proposition~\ref{G.SG2}, we obtain that $\lim_{t\to \infty}S(t)=\sigma_1$ and since the latter limit is nonzero we find from the first equation in \eqref{submodel1} that
$$
0=\lim_{t\to \infty}\left(\frac{b}{K}(S^{**}-S(t)) -\alpha_1I_1(t)-\alpha_2I_2(t) -\hat\alpha_3 I_{12}(t)\right)=
\frac{b}{K}(S^{**}-\sigma_1) -\alpha_1\lim_{t\to \infty}I_1(t),
$$
this proves that $\lim_{t\to\infty}I_1=I_1^*.$

The case when the equality in \eqref{conditionG3} attains is studied similar to that in the proof of Proposition~\ref{G.SG2}.
\end{proof}

\begin{remark}
Finally remark, that a similar analysis in the case for $G_4$ shows that the corresponding Lyapunov function does not have a negative derivative because  by assumption \eqref{assum} one has
	$$
	\left(\alpha_1(\sigma_2-\sigma_1)+\gamma_2I_2^*\right)>0.
	$$
 This indirectly shows that two infectious agents may not coexist together in the absence of coinfection.
	\end{remark}

\section{Transition dynamics}\label{sec:transition}

As it was already mentioned in Introduction, in \cite{SKTW18a} Lotka-Volterra version of our present model  \eqref{submodel1}, which corresponds to vanishing of the transmission parameters \eqref{vanish}.

In \cite{SKTW18a} a very striking result has been established asserting that for any $K>0$ and \textit{each} admissible choice of the fundamental parameter
$$
\hat{p}=(b,\mu_0,\mu_1,\mu_2,\mu_3,\alpha_1,\alpha_2,\alpha_3,\eta_1,\eta_2)\in R^{10}_+,
$$
there exists exactly one stable equilibrium point $E=E(K,\hat{p})$ which depends continuously on the data $(K,\hat{p})$. It is  natural to consider the transition dynamics of the stable equilibrium point $E(K,\hat{p})$ as a function of the carrying capacity $K$ keeping other parameters in $\hat{p}$ fixed. This transition dynamics has a clear biological meaning establishing the relationship between the infection transition rates $\hat{p}$  and the character of the corresponding equilibrium state. Furthermore, this also implies all possible scenarios how equilibrium states depend on the carrying capacity of the system. More precisely, in \cite{SKTW18a} there were only \textit{three} possible scenarios, each starting with the healthy equilibrium state for small values of $K$ and ending up at a certain equilibrium state when $K$ becomes sufficiently large and it was referred  as the \textit{continuity of the transition dynamics} of  stable equilibrium points.

It is naturally to expect that the continuity of the transition dynamics will hold  true for small positive $\beta_i$ and $\gamma_j$,  as a perturbation of \eqref{vanish}. The numerical simulations support this conjecture. It is, however, unreasonable to believe that   the latter property should  hold for any positive data $(\beta_1,\beta_2,\gamma_1,\gamma_2)$.
Below we prove a part of our conjecture.

It is convenient to  formulate our result for the relative carrying capacity $S^{**}$ instead of $K$ (note that $S^{**}$ differs from $K$ by a multiplicative constant only). Let us also introduce the threshold in \eqref{conditionG3}. Namely, let $\sigma_0$ denote the solution of the following equation:
\begin{equation}\label{conditionG30}
	\frac{b}{K\alpha_1}(\sigma_0-\sigma_1)= \min\left\{
\frac{\alpha_2(\sigma_2-\sigma_1)}{\gamma_1}, \,
\frac{\hat{\alpha}_3(\sigma_3-\sigma_1)}{\eta_1}
\right\}.
\end{equation}
Then
$$\sigma_0=\sigma_0(K)>\sigma_1
$$
and the global stability of $G_3$ holds whenever
$$
\sigma_1\le S^{**}\le \sigma_0.
$$
Combining the latter with Proposition~\ref{G.SG2} and Proposition~\ref{G.SG3}, we arrive at the following continuity of the transition dynamics.

\begin{theorem}
Let $\hat{p}>0$ be fixed. Then
\begin{itemize}
  \item[(i)] if $S^{**}$ is increasing and $0<S^{**}<\sigma_1$ then the only possible stable equilibrium state of \eqref{submodel1} is $G_2$;
  \item[(i')] when $S^{**}=\sigma_1$, $G_2$  coincides with $G_3$ and it is the only possible stable equilibrium state of \eqref{submodel1};
  \item[(ii)] there exists  $\sigma_0>\sigma_1$ such that $G_3$ is the only possible stable equilibrium state of \eqref{submodel1} for $\sigma_1<S^{**}<\sigma_0$
%  \item[(ii')] when $S^{**}=\sigma_0$, $G_3$  coincides with $G_5$ and it is the only possible stable equilibrium state of \eqref{submodel1}.
\end{itemize}
\end{theorem}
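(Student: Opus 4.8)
The plan is to assemble the three assertions directly from the global-stability results already proved, since each item is essentially a restatement of one of those propositions together with the a priori bound of Lemma~\ref{lem:equi}. First I would prove (i). Assume $0<S^{**}<\sigma_1$. By Lemma~\ref{lem:equi}, every nontrivial equilibrium point $Y\neq G_1$ satisfies $\sigma_1\le Y_0$ unless $Y_0=S^{**}$; since $S^{**}<\sigma_1$, the only possibility left is $Y_0=S^{**}$, which by the same lemma forces $Y_1=Y_2=Y_3=0$, i.e.\ $Y=G_2$. The trivial point $G_1$ is unstable by the Jacobian computation preceding Proposition~4.5 (the eigenvalue $b-\mu_0>0$), so $G_2$ is the only candidate for a stable equilibrium, and Proposition~\ref{G.SG2} shows it is in fact globally asymptotically stable. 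Hence $G_2$ is the unique stable equilibrium state.

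Next, (i$'$): when $S^{**}=\sigma_1$, formula \eqref{eqpt3} gives $I_1^*=\frac{b}{K\alpha_1}(S^{**}-\sigma_1)=0$, so $G_3=(\sigma_1,0,0,0)=(S^{**},0,0,0)=G_2$; the coincidence is immediate. Uniqueness follows exactly as in (i): Lemma~\ref{lem:equi} still forces any nontrivial equilibrium other than $G_2$ to have $Y_0\ge\sigma_1=S^{**}$ with equality only for $G_2$, and $G_1$ is unstable, while the boundary case $S^{**}=\sigma_1$ of Proposition~\ref{G.SG2} (treated explicitly in its proof) gives global asymptotic stability of $G_2$.

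Finally, (ii): define $\sigma_0=\sigma_0(K)$ by \eqref{conditionG30}; since the right-hand side is a fixed positive number and the left-hand side is an increasing affine function of $\sigma_0$ vanishing at $\sigma_0=\sigma_1$, the equation has a unique solution and $\sigma_0>\sigma_1$. For $\sigma_1<S^{**}<\sigma_0$ the inequality \eqref{conditionG3} holds strictly, so Proposition~\ref{G.SG3} applies and $G_3$ is globally asymptotically stable; in particular $I_1^*>0$, so $G_3$ is genuinely distinct from $G_2$. For uniqueness among stable equilibria I would check each of the other named equilibria in turn: $G_1$ is always unstable; $G_2$ has $S^{**}>\sigma_1$, so by Proposition~\ref{pro:G2} (which requires $S^{**}\le\sigma_1$) it is not locally stable; $G_4$ requires $S^{**}>\sigma_2$, but \eqref{assum} together with $S^{**}<\sigma_0$ and the definition of $\sigma_0$ need not by itself give $S^{**}<\sigma_2$ — however, global asymptotic stability of $G_3$ already precludes convergence to any other equilibrium, so no other equilibrium (including any $G_5$-type coexistence point) can be stable. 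I expect the only delicate point to be making this last exclusion airtight: one must invoke that a globally asymptotically stable equilibrium is necessarily the unique attractor, so that the mere existence of the globally stable $G_3$ rules out stability of every competitor without case analysis. With that remark, the three items follow by combining Lemma~\ref{lem:equi}, Proposition~\ref{G.SG2} and Proposition~\ref{G.SG3}.
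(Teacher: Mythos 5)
Your proposal is correct and follows essentially the same route as the paper: the paper's own proof is a one-line remark that the claims follow from the global stability results (Propositions~\ref{G.SG2} and \ref{G.SG3}), precisely because a globally asymptotically stable equilibrium excludes any other stable one. You simply make explicit the supporting details (the a priori bound of Lemma~\ref{lem:equi}, the coincidence $G_3=G_2$ at $S^{**}=\sigma_1$, and the definition of $\sigma_0$ via \eqref{conditionG30}) that the paper leaves implicit.
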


\begin{proof}
The claims immediately follow from the global stability  of the corresponding equilibria (indeed, by virtue of the global stability there can exist at most one locally stable point!)
\end{proof}

The case $S^{**}>\sigma_0$ is more subtle, but we have at least some local information near  $\hat\sigma=\sigma_1+\frac{K\alpha_1}{b}\Lambda$ and it follows from remark  \ref{R3} , $\hat\sigma>\sigma_0.$

The following theorem describes the bifurcation of equilibrium point $G_3$ in the case when  $\beta_i, \gamma_i, ~i=1,2$  are small  and $S^{**}$ cross the threshold $\hat\sigma.$
\begin{theorem}
Let $\hat{p}>0$ be fixed and $\beta_1,\beta_2,\gamma_1$and $\gamma_2$ are small then
%\begin{itemize}
 % \item[(iii)]
   there exists $\epsilon>0$ such that if $S^* \in (\hat\sigma,\hat\sigma-\epsilon)$ then the equilbrium point  $G_3$ is unstable and the equilibrium point  $G_5$ is  stable and located near $G_3$.
  % the only possible stable equilibrium state of \eqref{submodel1} for $\sigma_0<S^{**}<\sigma_0+\epsilon$
%\end{itemize}
\end{theorem}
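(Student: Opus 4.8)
The statement is a local bifurcation result: as the relative carrying capacity $S^{**}$ crosses the threshold $\hat\sigma=\sigma_1+\frac{K\alpha_1}{b}\Lambda$ (with $\Lambda$ the unique positive root of $\Delta(\lambda)$ from Section~\ref{sec:G3loc}; here the interval in the statement should be read as $S^{**}\in(\hat\sigma,\hat\sigma+\epsilon)$, since by \eqref{I1ineq} $G_3$ is stable exactly for $\sigma_1<S^{**}<\hat\sigma$), the equilibrium $G_3$ loses stability and a new, positive, stable equilibrium $G_5$ branches off it. The natural tool is a Lyapunov--Schmidt reduction (equivalently, a one-dimensional center manifold argument) at $S^{**}=\hat\sigma$. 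First I would record that $G_3=(\sigma_1,\tfrac{b}{K\alpha_1}(S^{**}-\sigma_1),0,0)$ solves \eqref{equil} for \emph{every} $S^{**}>\sigma_1$, so we are bifurcating off a persistent branch; and then, using the decomposition of $J(G_3)$ into the $2\times2$ blocks $A$ and $B$ of Section~\ref{sec:G3loc}, to check that at $S^{**}=\hat\sigma$ the Jacobian has a \emph{simple} zero eigenvalue with all other eigenvalues in the open left half-plane. Indeed $A$ is always stable ($\trace A<0$, $\det A=\alpha_1^2 I_1^*\sigma_1>0$ once $I_1^*>0$), while for $B$ at $S^{**}=\hat\sigma$ one has $\det B=\Delta(\Lambda)=0$ and $\trace B=B_{11}+B_{22}<0$, because $B_{11}<0$ and, by \eqref{nonvanish}, $B_{11}B_{22}=B_{12}B_{21}=\beta_2\sigma_1(\gamma_1+\gamma_2)I_1^*>0$ forces $B_{22}<0$.

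With this spectral picture the equilibrium system \eqref{equil} reduces near $(G_3,\hat\sigma)$ to a scalar equation $g(s,S^{**})=0$, with $s$ the coordinate along the one-dimensional kernel of $J(G_3)$ at $\hat\sigma$; since $G_3$ itself solves \eqref{equil} for all nearby $S^{**}$, the trivial branch factors out, $g(s,S^{**})=s\,\tilde g(s,S^{**})$, and the nontrivial equilibrium is $\{\tilde g=0\}$, a $C^1$ curve $s=s(S^{**})$ through $(0,\hat\sigma)$ by the implicit function theorem provided $\partial_s\tilde g(0,\hat\sigma)\ne0$ (a transcritical, nondegenerate bifurcation). The transversality needed for the exchange of stability is immediate: the critical eigenvalue $\lambda(S^{**})$ is the small eigenvalue of $B(S^{**})$, and $\det B(S^{**})=\Delta\bigl(I_1^*(S^{**})\bigr)$ decreases through $0$ as $S^{**}$ increases through $\hat\sigma$ (because $\Delta$ has negative leading coefficient, hence $\Delta'(\Lambda)<0$, and $I_1^*$ is increasing in $S^{**}$), while $\trace B$ stays negative; so $\lambda(S^{**})$ passes from negative to positive at $\hat\sigma$. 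By the standard transcritical bifurcation theorem (Sotomayor's theorem, principle of exchange of stability), $G_3$ is unstable for $S^{**}$ slightly above $\hat\sigma$ and the bifurcating branch $G_5=G_5(S^{**})$ is nondegenerate there; its three noncritical eigenvalues stay in the left half-plane by continuity and the critical one becomes negative along $G_5$, so $G_5$ is locally asymptotically stable, and being $O(S^{**}-\hat\sigma)$-close to $G_3$ it lies near $G_3$, as claimed.

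The remaining point, and the main obstacle, is to verify that the bifurcating branch is the biologically meaningful one: that $G_5$ has all four coordinates strictly positive for $S^{**}\in(\hat\sigma,\hat\sigma+\epsilon)$, i.e. that the $I_2$- and $I_{12}$-components (which vanish at $G_3$) become positive, rather than negative, as $S^{**}$ increases past $\hat\sigma$. This is the sign of the leading coefficient of the bifurcating branch, a quantity built from the kernel and cokernel vectors of $J(G_3)$ at $\hat\sigma$, and determining it in the general system is delicate. Here the smallness of $\beta_1,\beta_2,\gamma_1,\gamma_2$ is used: at $\beta_1=\beta_2=\gamma_1=\gamma_2=0$ the model is the Lotka--Volterra system of \cite{SKTW18a}, where the transition from $G_3$ to a $G_5$-type equilibrium at the analogous threshold is explicit and nondegenerate and the coexistence branch is known to enter the positive cone precisely for the carrying capacities beyond that threshold; all the relevant quantities ($\Lambda$, $\hat\sigma$, the kernel vectors, the bifurcation coefficient) depend continuously on $(\beta_i,\gamma_j)$, so for these small the coefficient keeps both its sign and its nonvanishing and the conclusion persists. (Alternatively one carries out the second-order Lyapunov--Schmidt computation directly, the smallness then entering only to control that coefficient.) Uniqueness of $G_5$ near $G_3$ is part of the implicit function theorem output, and Remark~\ref{R3} (which gives $\hat\sigma>\sigma_0$) shows that this bifurcation occurs strictly beyond the range $\sigma_1\le S^{**}\le\sigma_0$ where $G_3$ is globally stable by Proposition~\ref{G.SG3}.
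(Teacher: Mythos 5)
Your argument is correct in substance and targets the same bifurcation point as the paper (the loss of stability of $G_3$ when $\det B=\Delta(I_1^*)$ crosses zero, i.e.\ $S^{**}$ crosses $\hat\sigma$), and you are right that the interval in the statement must be read as $(\hat\sigma,\hat\sigma+\epsilon)$. The execution, however, differs from the paper's. You run the abstract machinery: verify that the critical eigenvalue is simple (using $B_{11}<0$ and $B_{11}B_{22}=B_{12}B_{21}>0$ at $\det B=0$ to get $\trace B<0$), check transversality via $\Delta'(\Lambda)<0$, and invoke Lyapunov--Schmidt plus the transcritical exchange-of-stability principle. The paper instead makes an explicit asymptotic ansatz $Y_0=\sigma_1+\xi_0$, $Y_1=I_1^*+\xi_1$, $Y_2=\lambda Y_3$ in the equilibrium equations \eqref{equil}, derives $\lambda=\lambda^*+o(1)$ with $\lambda^*=\beta_2\sigma_1/(\mu_2+\gamma_2I_1^*-\alpha_2\sigma_1)>0$, and obtains $Y_3=cD+O(D^2)$ with $c$ computed explicitly in the Lotka--Volterra limit, so that $Y_3>0$ for small $D<0$. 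What your route buys is a cleaner treatment of the \emph{stability} assertions, which the paper's sketch does not really address (it establishes existence and positivity of the bifurcating point but never verifies that $G_5$ is stable); what the paper's route buys is the concrete location of $G_5$ and, importantly, the positivity of the $I_2$-coordinate. On that last point your continuity argument is slightly too quick: at the Lotka--Volterra limit $\beta_2=0$ the branch emanating from $G_3$ has $I_2\equiv 0$ (it is not a coexistence point there), so the sign of the $I_2$-component is \emph{not} determined by continuity from a nonvanishing limit; one needs the extra observation, as in the paper, that $I_2=\lambda I_{12}$ with $\lambda>0$ whenever $\beta_2>0$ (since $\mu_2-\alpha_2\sigma_1=\alpha_2(\sigma_2-\sigma_1)>0$). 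With that point added, your proof is complete at the same level of rigor as the paper's sketch.
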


\begin{proof}
We hive a sketch of the proof. Let
\begin{equation}\label{K1}
D:=\det B=(\alpha_2(\sigma_2-\sigma_1)+\gamma_2I_1^{*})(\mu_3-\alpha_3\sigma_1-\eta_1I_1^{*})-\beta_2(\gamma_1+\gamma_2)\sigma_1I_1^*,
\end{equation}
where we keep the notation
$$
I_1^*=\frac{b}{K\alpha_1}(S^{**}-\sigma_1).
$$
Let us consider the evolution of the equilibrium point $G_3$ for small values of the determinant  $D$ under an additional natural condition that $S^{**}>\sigma_1$. Certainly, the equilibrium point $G_3$ exists for any such admissible values. When $D>0$ is small, by Proposition~\ref{pro:G3} $G_3$ is locally stable, and when $D=0$ it looses the local stability. It turns out that at this moment $G_3$ bifurcates into a pair of points: (a) it continues as $G_3$ for small negative values of $D$, and (b) it appears one more equilibrium point of type $G_5$ in a neighborhood of $G_3$. Indeed, to describe the latter, we will seek it by perturbation of $G_3$ in the form
$$
Y_0=\sigma_1+\xi_0,\;\; Y_1=I_1^*+\xi_1,
$$
where $\xi_0$, $\xi_1$ are small real parameters, such that
$$
\lim_{D\to 0}\xi_0=\lim_{D\to 0}\xi_1=0.
$$
and
$$
Y_2=\lambda Y_3,\qquad  Y_3\neq 0, \qquad \lim_{D\to 0}Y_3=0.
$$
Then the third equation in \eqref{equil} yields
$$
\lambda=\frac{\beta_2Y_0}{\mu_2+\gamma_2Y_1+\eta_2Y_3-\alpha_2Y_0}
$$
For $\beta_2=0$ we have $\lambda=0$ and for $\beta_2>0$ we find
$$
\lambda=\lambda^*+o(1),\;\;\lambda^*=\frac{\beta_2\sigma_1}{\mu_2+\gamma_2I_1^*-\alpha_2\sigma_1}.
$$
is positive. Here $o(1)$ denotes the rest term when $D\to 0$. From the fourth equation in \eqref{equil} we obtain by eliminating of $Y_2$
\begin{equation}\label{K2}
(\mu_3-\alpha_3Y_0-\eta_1Y_1-\lambda\eta_2Y_3)({\mu_2+\gamma_2Y_1+\eta_2Y_3-\alpha_2Y_0})-\beta_2(\gamma_1+\gamma_2)Y_0Y_1=0.
\end{equation}
Now let us express $\xi_0$ and $\xi_1$ through $Y_3$. From the first two equations in \eqref{equil}  (with vanishing derivatives) we get
\begin{equation}\label{K3}
\xi_0=\frac{1}{\alpha_1}\Big(\eta_1+\lambda^*\gamma_1-\beta_1\frac{\sigma_1}{I_1^*}\Big)Y_3+O(Y_3^2)
\end{equation}
and
\begin{equation}\label{K4}
\xi_1=-\frac{1}{\alpha_1}\Big(\lambda^*\alpha_2+\delta\Big)Y_3-\frac{b\xi_0}{\alpha_1K}+O(Y_3^2).
\end{equation}
Using the notation (\ref{K1}) and setting
$$
A=\mu_3-\alpha_3\sigma_1-\eta_1I_1^{*},\;\;B=\alpha_2(\sigma_2-\sigma_1)+\gamma_2I_1^{*},
$$
we can write (\ref{K2}) as
$$
D+A(\gamma_2\xi_1+\eta_2Y_3-\alpha_2\xi_0)-B(\alpha_3\xi_0+\eta_1\xi_1+\lambda\eta_2Y_3)-\beta_2(\gamma_1+\gamma_2)(\sigma_1\xi_1+I_1^*\xi_0)+O(Y_3^2).
$$
This yields
\begin{equation}\label{K5}
Y_3=cD+O(D^2),
\end{equation}
where the coefficient $c$ can be easily evaluated in the case $\beta_1=\beta_2=0$ and $\gamma_1=\gamma_2=0$. Indeed, in this case $\lambda=0$,
$$
A=O(D)\;\;\mbox{and}\;\;\alpha_3\xi_0+\eta_1\xi_1=-\frac{b\eta_1^2}{K\alpha_1^2}.
$$
This yields
$$
B\frac{b\eta_1^2}{K\alpha_1^2}Y_3=-D+O(D^2)
$$
hence we obtain (\ref{K5}) with
$$
c=-\frac{K\alpha_1^2}{Bb\eta_1^2}.
$$
It follows from \eqref{K5} that for small negative $D$, the perturbation of $G_3$ bifurcates in an equilibrium point of type $G_5$ with \textit{positive} coordinates. By the continuity argument the latter property still holds true for small $\beta_i$ and $\gamma_j$.
\end{proof}

\section{Discussion}
In this paper we designed an SIR model as an extension of the model presented in  \cite{SKTW18a} and observed the effect of density dependence population regulation on disease dynamics. The complete local and global stability analysis of two boundary equilibrium points revealed that for small carrying capacity the disease free equilibrium point is always stable so disease can not persist in small population but for relatively large carrying capacity under some conditions we have one globally stable endemic equilibrium point. The existence of an endemic equilibrium point guarantees the persistence of the disease with a possible future threat of any outbreak in the population.
In future, similar to \cite{SKTW18a}, we would like to investigate the existence and uniqueness of the equilibrium point $G_5$. Since, the equilibrium point $G_3$ looses its stability and bifurcates to coexistence equilibrium point so it is worthwhile to show the existence of coexistence equilibrium in that case to understand the complete dynamics of the disease. We would like to see the dynamical behavior of system for significantly large $K$ and the case when $K = \infty.$

\subsection*{Conflict of interest}

On behalf of all authors, the corresponding author states that there is no conflict of interest.
\subsection*{Acknowledgements}
%\acks
A PhD scholarship to Samia Ghersheen from Higher Eduaction Commision, Pakistan through SBK Women University Quetta, Pakistan is gratefully acknowledeged.
%\bibliographystyle{plain}
%\begin{small}
	%\bibliographystyle{abbrv}{acm}{siam}
	%\bibliographystyle{plain}
	% \bibligraphystyle{WileyNJD-AMA}

\bibliographystyle{wileyNJD-AMA}
%\bibliography{references6}

\begin{thebibliography}{10}

\bibitem{Griffiths}
Griffiths Emily~C, Pedersen Amy~B, Fenton Andy, Petchey Owen~L. The nature and
  consequences of coinfection in humans.  {\it Journal of Infection.
  }2011;63(3):200--206.

\bibitem{Viney}
Viney Mark~E, Graham Andrea~L. Patterns and processes in parasite co-infection.
   In: Elsevier 2013 (pp. 321--369).

\bibitem{May}
Anderson Roy~M, May Robert~M. {\it Infectious diseases of humans: dynamics and
  control}.
\newblock Oxford university press; 1992.

\bibitem{Glasser}
Glasser John, Meltzer Martin, Levin Bruce. {\it Mathematical modeling and
  public policy: Responding to health crises.. } 2004.

\bibitem{Ferguson}
Ferguson Neil, Anderson Roy, Gupta Sunetra. The effect of antibody-dependent
  enhancement on the transmission dynamics and persistence of multiple-strain
  pathogens.  {\it Proc. Nat. Acad. Sci.. }1999;96(2):790--794.

\bibitem{Kawaguchi}
Kawaguchi Isao, Sasaki Akira, Boots Michael. Why are dengue virus serotypes so
  distantly related? Enhancement and limiting serotype similarity between
  dengue virus strains.  {\it Proc. Royal Soc. London B: Biol. Sci..
  }2003;270(1530):2241--2247.

\bibitem{Sharp1}
Sharp G.B, Kawaoka D., others . Coinfection of wild ducks by influenza A
  viruses: distribution patterns and biological significance..  {\it J.
  Virology. }1997;71(8):6128--6135.

\bibitem{Chaturvedi}
Chaturvedi A., Katki H., others . Human papillomavirus infection with multiple
  types: pattern of coinfection and risk of cervical disease.  {\it J. Inf.
  Diseases. }2011;203(7):910--920.

\bibitem{Mukandavire}
Mukandavire Zindoga, Gumel Abba~B, Garira Winston, Tchuenche Jean~Michel.
  Mathematical analysis of a model for HIV-malaria co-infection.  {\it Math.
  Biosci. Eng.. }2009;.

\bibitem{Nthiiri}
Nthiiri Joyce~K, Lawi George~O, Manyonge Alfred. Mathematical Model of
  Pneumonia and HIV/AIDS Co-Infection in the Presence of Protection.  {\it Int.
  J. Math. Anal.. }2015;9(42):2069--2085.

\bibitem{Abu1}
Abu-Raddad Laith~J, Patnaik Padmaja, Kublin James~G. Dual infection with HIV
  and malaria fuels the spread of both diseases in sub-Saharan Africa.  {\it
  Science. }2006;314(5805):1603--1606.

\bibitem{Okosun}
Okosun K.~O., Makinde O.~D.. A co-infection model of malaria and cholera
  diseases with optimal control.  {\it Math. Biosci.. }2014;258:19--32.

\bibitem{Castillo}
Castillo-Chavez Carlos, Huang Wenzhang, Li~Jia. Competitive exclusion and
  coexistence of multiple strains in an {SIS} {STD} model.  {\it SIAM J. Appl.
  Math.. }1999;59(5):1790--1811.

\bibitem{Zhang}
Zhang Pei, Sandland Gregory~J., others . Evolutionary implications for
  interactions between multiple strains of host and parasite.  {\it J. Theor.
  Biol.. }2007;248(2):225--240.

\bibitem{Bichara}
Bichara D., Iggidr A., Sallet G.. Global analysis of multi-strains {SIS}, {SIR}
  and {MSIR} epidemic models.  {\it J. Appl. Math. Comput..
  }2014;44(1-2):273--292.

\bibitem{Martcheva}
Martcheva Maia, Pilyugin Sergei~S.. The role of coinfection in multidisease
  dynamics.  {\it SIAM J. Appl. Math.. }2006;66(3):843--872.

\bibitem{Allen}
Allen Linda~JS, Langlais Michel, Phillips Carleton~J. The dynamics of two viral
  infections in a single host population with applications to hantavirus.  {\it
  Math. Biosci.. }2003;186(2):191--217.

\bibitem{Gao}
Gao Daozhou, Porco Travis~C., Ruan Shigui. Coinfection dynamics of two diseases
  in a single host population.  {\it J. Math. Anal. Appl..
  }2016;442(1):171--188.

\bibitem{SKTW18a}
Ghersheen Samia, Kozlov Vladimir, Tkachev Vladimir~G, Wennergren Uno. Dynamical
  behaviour of SIR model with coinfection: the case of finite carrying
  capacity.  {\it Math. Meth. Appl. Sci.. }2019;42(8).

\bibitem{Bremermann}
Bremermann Hans~J, Thieme HR. A competitive exclusion principle for pathogen
  virulence.  {\it Journal of mathematical biology. }1989;27(2):179--190.

\bibitem{Zhou}
Zhou Jinshi, Hethcote Herbert~W. Population size dependent incidence in models
  for diseases without immunity.  {\it Journal of mathematical biology.
  }1994;32(8):809--834.

\bibitem{Haddad}
Haddad W.M., Chellaboina V., Hui Q.. {\it Nonnegative and compartmental
  dynamical systems}.
\newblock Princeton Univ. Press; 2010.

\end{thebibliography}

\newcommand{\sectionbreak}{\clearpage}

\section*{Author Biography}
\begin{biography}{\includegraphics[height=86pt]{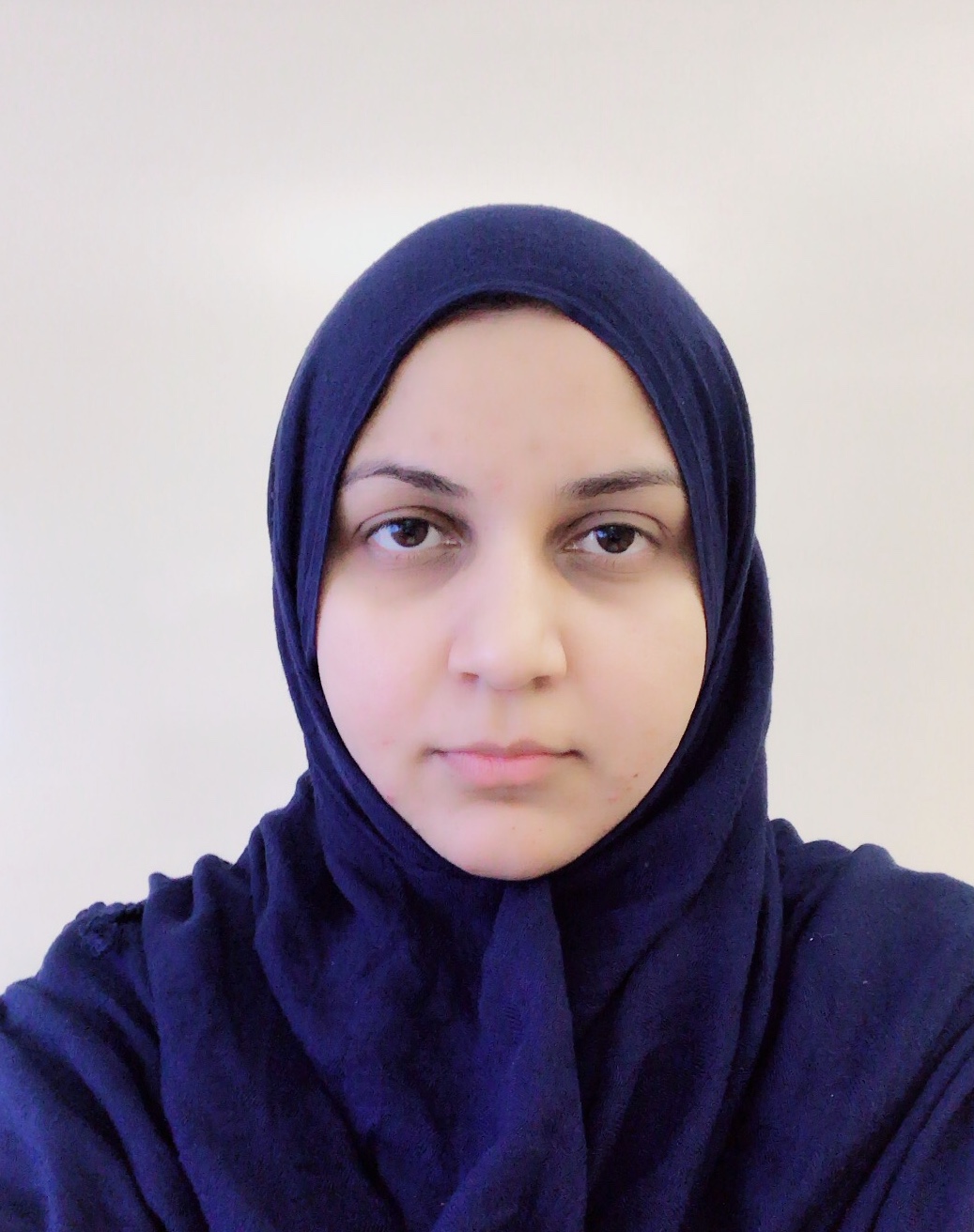}}	{\textbf{Samia Ghersheen}  has received the Master's degree in mathematics from Pakistan. She is currently PhD student with scholarship in the department of mathematics Link\"oping  University, Sweden. Her research area is mathematical modeling of infectious disease.}
\end{biography}
\bigskip
\bigskip
\begin{biography}{\includegraphics[height=86pt]{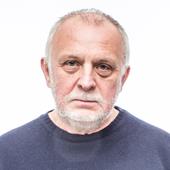}}{\textbf{ Vladimir Kozlov} graduated from the Leningrad University in 1976. He defended his PhD thesis in 1980 at the same University.
		He is a Doctor of Sciences in Mathematics  from 1990. In 1992 he moved to Link\"oping University, Sweden, and from 2010 he is a head of the division of Mathematics and Applied Mathematics at the same University. Vladimir Kozlov is an author of five books  and more than 160 articles. Research interests include applied mathematics (in particular, applications to theory elasticity, fluid mechanics, biology, medicine), partial differential equations, spectral theory of operators, asymptotic methods and inverse problems.}
\end{biography}
\bigskip
\bigskip
\begin{biography}{\includegraphics[height=86pt]{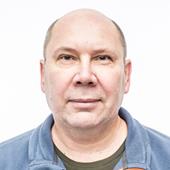}}{\textbf{ Vladimir Tkachev}  graduated from the Volgograd University in 1985. He defended his PhD thesis in 1990 at the Sobolev Institution of Mathematics.
		He is a Doctor of Sciences in Mathematics  from 1998. Since 2012 Vladimir Tkachev works  as Associate Professor at Link\"oping  University, Sweden. His research interests include partial differential equations (minimal surface equation), complex analysis and nonassociative algebras.}
\end{biography}
\bigskip
\bigskip
\begin{biography}{\includegraphics[height=86pt]{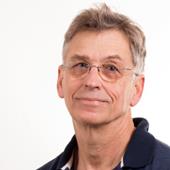}}{\textbf{Uno Wennergren}   is a professor in theoretical biology at Link\"oping University. He took his PhD 1994 by combining mathematics and biology. He did his postdoc by Peter Kareiva at University of Washington 1994 and 1995. Professor Wennergren is the head of a division of  15 researchers in theoretical biology. His own research spans a wide variety of mathematical modelling of systems and process of biological relevance: from animal welfare to intricate populations dynamics of ecological systems.}
\end{biography}

%\begin{biography}	{\textbf{Samia Ghersheen}  has received the Masater degree in mathematics from Pakistan.
		%She is currently PhD student with scholarship in the department of mathematics Linköping University, Sweden.
	%	Her research area is mathematical modeling of infectious disease.}
%\end{biography}
%\end{small}

%\section{First Problem}

%The header on this page is:
%\begin{quote}
%Received {\color{red}26 April 2016} | Revised {\color{red}6 June 2016} | Accepted {\color{red}6 June 2016}
%\end{quote}
%Authors do not know these dates, so they are likely to just leave the dates unchanged.
%It would be better to have a placeholder:
%\begin{quote}
%Received {\color{red}Added at production} | Revised {\color{red}Added at production} | Accepted {\color{red}Added at production}
%\end{quote}

\end{document}